\newcommand{\onefig}[1]{
\begin{figure}[htbp]
	\centering
	#1
\end{figure}
}
\newcommand{\henbi}[2]{\frac{\partial #1}{\partial #2}}
\newcommand{\R}{\mathbb R}
\newcommand{\set}[1]{\left\{#1\right\}}
\newcommand{\norm}[1]{\left\lVert#1\right\rVert}
\newcommand{\eps}{\varepsilon}
\newcommand{\abs}[1]{\left\lvert#1\right\rvert}
\newcommand{\mynorm}[1]{\norm{#1}}
\newcommand{\nequal}{\fallingdotseq}
\newcommand{\sobw}[2]{
	\ifx #10
		L^{#2}
	\else 
		\if #22
			H^{#1}
		\else
			W^{#1,#2}
		\fi
	\fi
}
\newcommand{\normsob}[3]{
	\if #32
		\norm{#1}_{#2}
	\else
		\norm{#1}_{#2,#3}
	\fi
}
\newcommand{\snormsob}[3]{
	\if #32
		\abs{#1}_{#2}
	\else
		\abs{#1}_{#2,#3}
	\fi
}
\newcommand{\normi}[1]{\lVert#1\rVert}
\newcommand{\absi}[1]{\lvert#1\rvert}
\newcommand{\normsobi}[3]{
	\if #32
		\normi{#1}_{#2}
	\else
		\normi{#1}_{#2,#3}
	\fi
}
\newcommand{\snormsobi}[3]{
	\if #32
		\absi{#1}_{#2}
	\else
		\absi{#1}_{#2,#3}
	\fi
}
\theoremstyle{plain}
\newtheorem{lemma}{Lemma}
\newtheorem{proposition}{Proposition}
\newtheorem{theorem}{Theorem}
\theoremstyle{definition}
\newtheorem*{Thschemezero}{Scheme LG}
\newtheorem*{Thschemejurai}{Scheme LG$^\prime$}
\newtheorem*{Thschemezettai}{Scheme GSLG}
\newtheorem{hypo}{Hypothesis}
\newtheorem{exam}{Example}
\newtheorem*{acknowledgements}{Acknowledgements}
\theoremstyle{remark}
\newtheorem{remark}{Remark}
\newcommand{\cuzero}{c_0}
\newcommand{\cuone}{c_1}
\newcommand{\cutwo}{c_2}
\newcommand{\cint}{c_{\Pi}}
\newcommand{\cuint}{\alpha_*}
\newcommand{\dtgiven}{\Delta t_0}
\newcommand{\cpro}{c_P}
\newcommand{\done}{\delta_*}
\newcommand{\cprop}{c_*}
\newcommand{\cproptwo}{c_{**}}
\newcommand{\cpropthree}{c_{***}}
\newcommand{\pk}[1]{P_{#1}}
\newcommand{\matrixnorm}[1]{\abs{#1}_{C(W^{1,\infty})}}
\newcommand{\pro}[1]{\widehat{#1}_h}
\newcommand{\eone}{e_h}
\newcommand{\proe}{\eta}
\newcommand{\bwd}{\overline D_{\Delta t}}
\newcommand{\jac}[2]{\abs{\henbi{#1}{#2}}}
\newcommand{\meas}[1]{\operatorname{meas}(#1)}
\newcommand{\schemezero}{Scheme LG}
\newcommand{\jurai}{Scheme LG$^\prime$}
\newcommand{\zettai}{Scheme GSLG}
\newcommand{\markone}{\includegraphics[height=2.5mm]{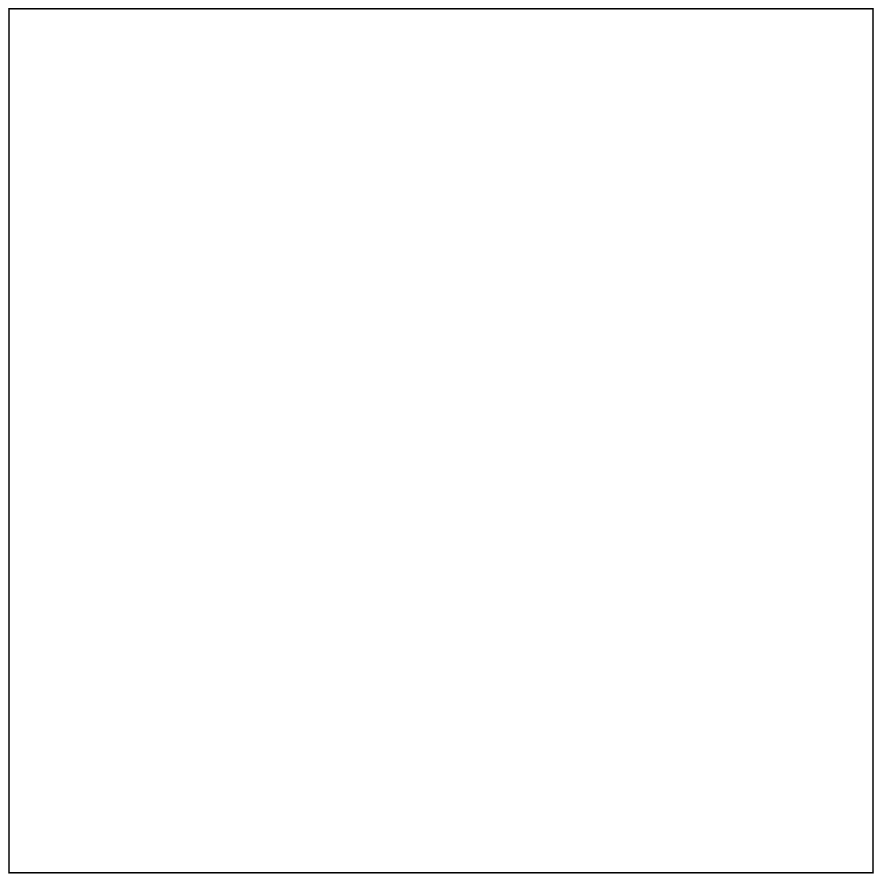}}
\newcommand{\marktwo}{\includegraphics[height=2.5mm]{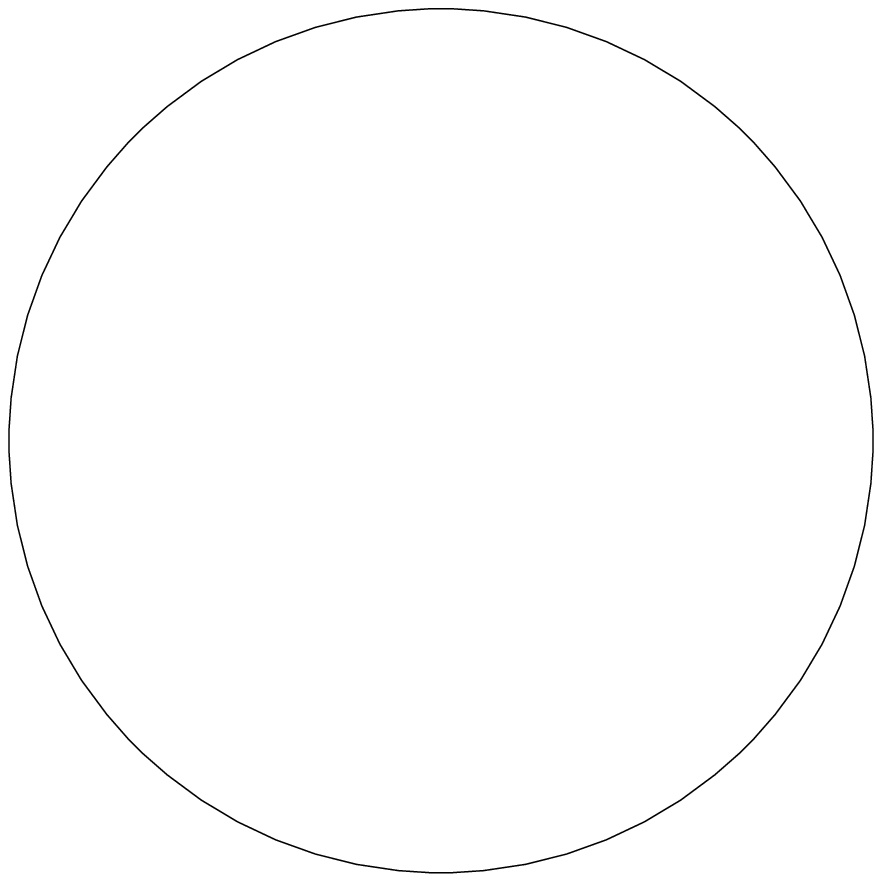}}
\newcommand{\markthree}{\includegraphics[height=2.5mm]{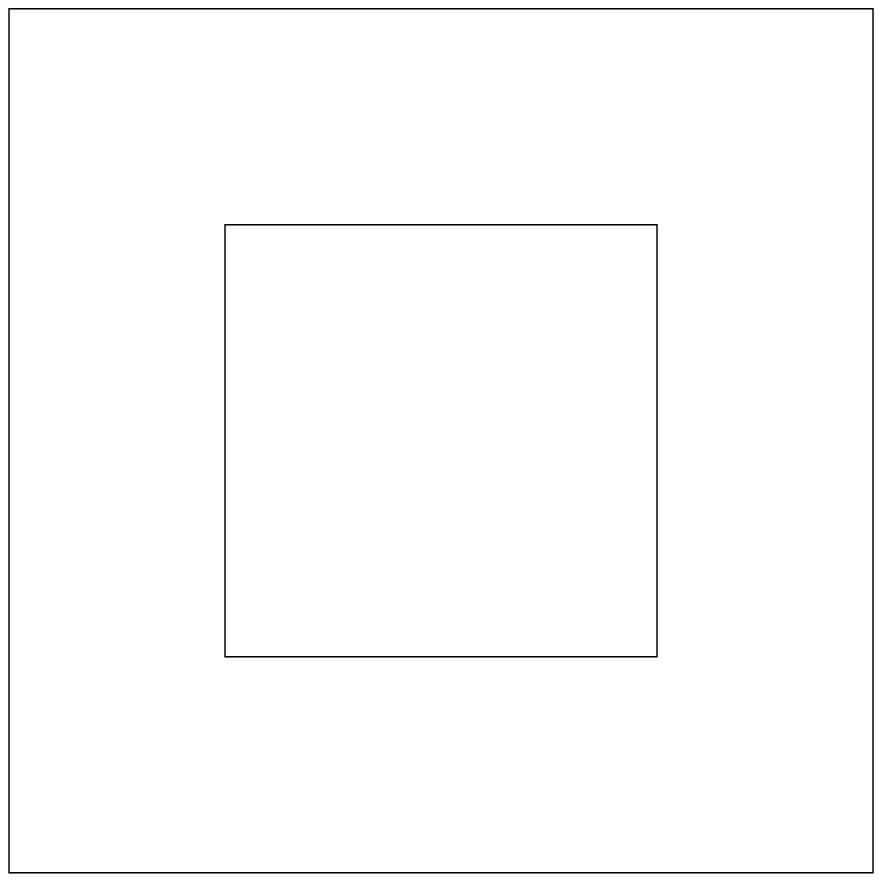}}
\newcommand{\markfour}{\includegraphics[height=2.5mm]{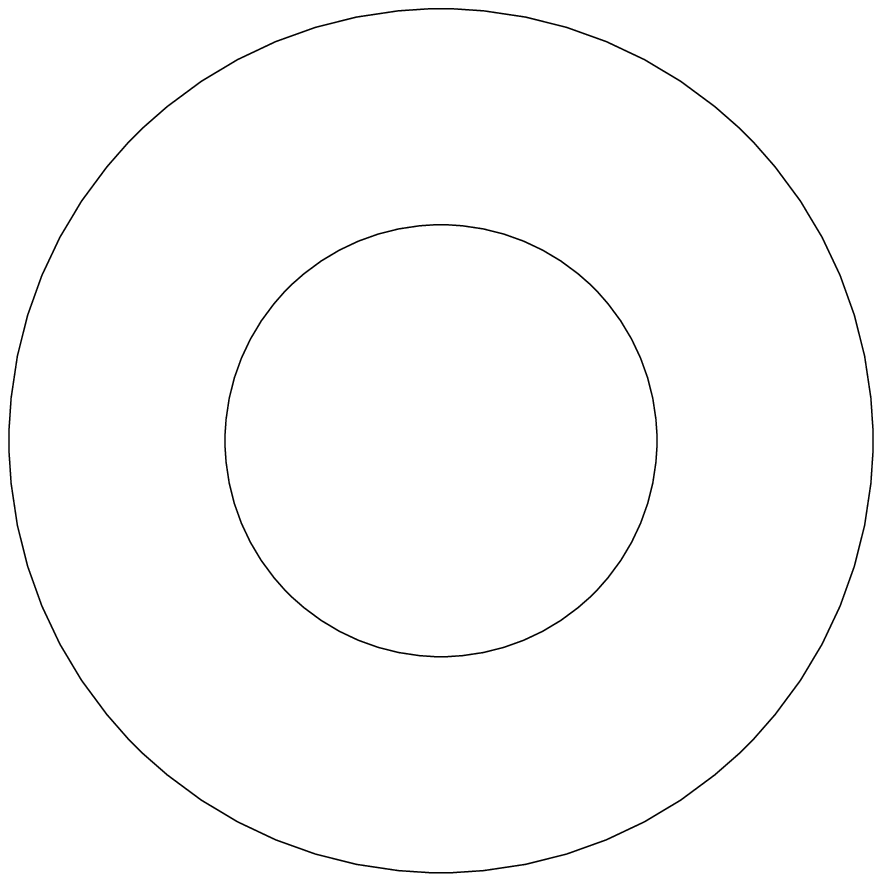}}
\newcommand{\markfive}{\includegraphics[height=2.5mm]{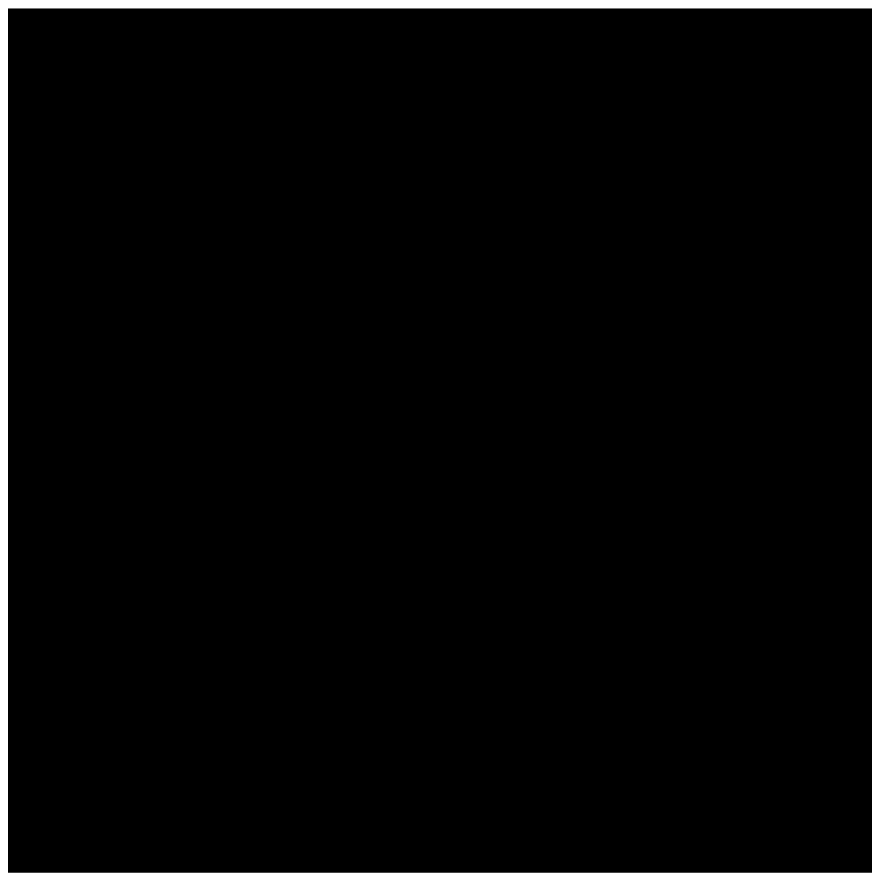}}
\newcommand{\marksix}{\includegraphics[height=2.5mm]{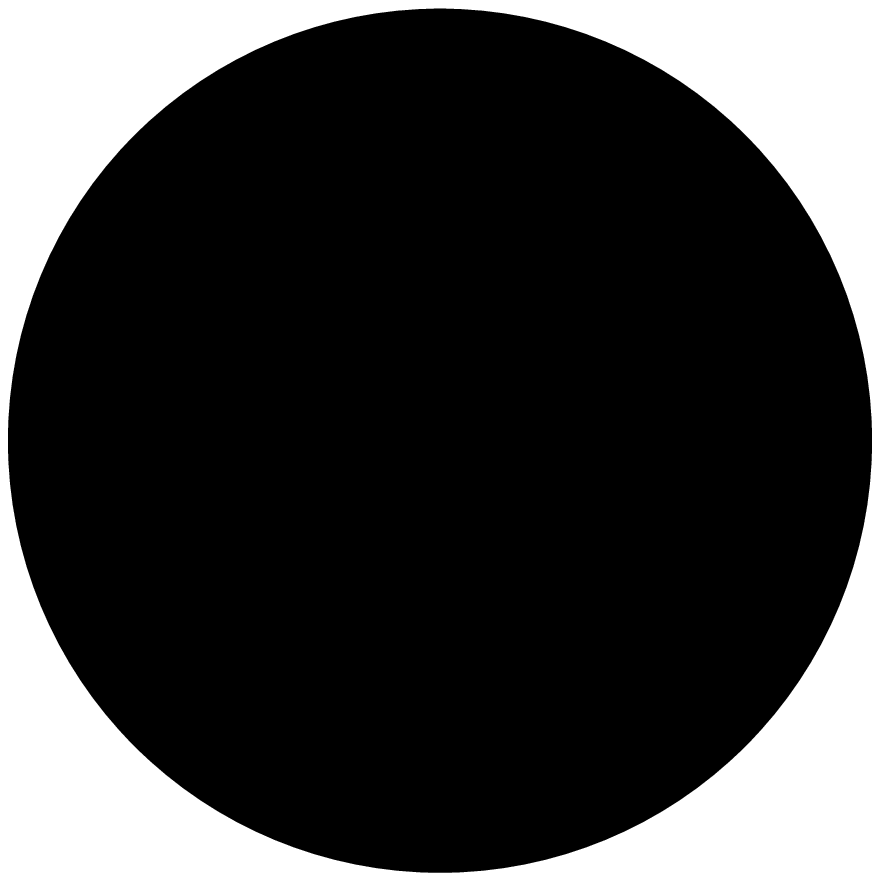}}
\newcommand{\figwidthbasic}{35mm}
\newcommand{\columnheight}{3mm}
\newenvironment{mytable}
	{\begin{table}[h]}
	{\end{table}}
\begin{document}
\title{
A genuinely stable Lagrange--Galerkin scheme for convection-diffusion problems} 
%
\author{Masahisa Tabata} 
\address[M. Tabata]{Department of Mathematics, Waseda University,
	3-4-1, Ohkubo, Shinjuku, Tokyo 169-8555, Japan\\
	\url{tabata@waseda.jp}}
\author{Shinya Uchiumi}
\address[S. Uchiumi]{Research Fellow of Japan Society for the Promotion of Science \\
	Graduate School of Fundamental Science and Engineering, Waseda University,
	3-4-1, Ohkubo, Shinjuku, Tokyo 169-8555, Japan\\
	\url{su48@fuji.waseda.jp}}
%
\date{April 18, 2015}
\begin{abstract}
We present a Lagrange--Galerkin scheme free from numerical quadrature for convection-diffusion problems. 
Since the scheme can be implemented exactly as it is, theoretical stability result is assured.  
While conventional Lagrange--Galerkin schemes may encounter the instability caused by numerical quadrature error, 
the  present scheme is genuinely stable.  
For the $\pk k$-element we prove error estimates 
of $O(\Delta t + h^2 + h^{k+1})$
in $\ell^\infty(L^2)$-norm and 
of $O(\Delta t + h^2 + h^k)$
in $\ell^\infty (H^1)$-norm.
Numerical results reflect these estimates.
%
\end{abstract}
\keywords{
	Lagrange--Galerkin scheme \and Finite element method \and Convection-diffusion problems \and Exact integration
}
\subjclass[2000]{65M12 \and 65M25 \and 65M60  \and 76M10
}
\maketitle
\section{Introduction}
The Lagrange--Galerkin method, 
which is also called characteristics finite element method or Galerkin-characteristics method,
is a powerful numerical method for flow problems such as the convection-diffusion equations and the Navier--Stokes equations.
In this method the material derivative is discretized along the characteristic curve, which originates the robustness for convection-dominated problems.  
Although, as a result of the discretization along the characteristic curve, a composite function term at the previous time appears, it is converted to the right-hand side in the system of the linear equations.  
Thus, the coefficient matrix in the left-hand side is symmetric, which allows us to use efficient linear solvers for symmetric matrices such as the conjugate gradient method and the minimal residual method \cite{templates,S-2003}.
\par
Stability and error analysis of LG schemes has been done in   
\cite{AchdouGuermond2000,BermejoSaavedra2012,BMMR-1997,DouglasRussell1982,MPS,NotsuTabataOseen2015,Pironneau1982,PironneauTabata2010,Priestley1994,RuiTabata2002,Suli};  
see also the bibliography therein. 
Pironneau \cite{Pironneau1982} analyzed convection-diffusion problems and the Navier--Stokes equations to obtain suboptimal convergence results.  
Optimal convergence results were obtained by Douglas--Russell \cite{DouglasRussell1982} for convection-diffusion problems and by S\"uli \cite{Suli} for the Navier--Stokes equations.  
Optimal convergence results of second order in time were obtained by Boukir et al. \cite{BMMR-1997} for the Navier--Stokes equations in multi-step method and by Rui--Tabata \cite{RuiTabata2002} for convection-diffusion problems 
in single-step method.  
All these theoretical results are derived under the condition that the integration of the composite function term is computed exactly.  
Since, in real problems, it is difficult to get the exact integration value, numerical quadrature is usually employed.    
It is, however, reported that instability may occur caused by numerical quadrature error in \cite{MPS,Tabata2007,TabataFujima2006}.  
That is, the theoretical stability results may collapse by the introduction of numerical quadrature.   

Several methods have been studied to avoid the instability.  
The map of a particle from a time to the previous time along the trajectory, which is nothing but to solve a system of ordinary differential equations (ODEs), is simplified in \cite{BermejoSaavedra2012,MPS,Priestley1994}.  
Morton--Priestley--Suli \cite{MPS} solved the ODEs only at the centroids of the elements, and Priestley \cite{Priestley1994} did only at the vertices of the elements.  
The map of the other points is approximated by linear interpolation of those values.    
It becomes possible to perform the exact integration of the composite function term with the simplified map.  
Bermejo--Saavedra \cite{BermejoSaavedra2012} used the same simplified map as  \cite{Priestley1994} to employ a numerical quadrature of high accuracy to the composite function term.  
Tanaka--Suzuki--Tabata \cite{TSTeng} approximated the map by a locally linearized velocity and the backward Euler approximation for the solution of the ODEs in $\pk1$-element.  
The approximate map makes possible the exact integration of the composite function term with the map.  
Pironneau--Tabata \cite{PironneauTabata2010} used mass lumping in $\pk1$-element to develop a scheme free from quadrature for convection-diffusion problems.  

In this paper we prove the stability and convergence for the scheme with the same approximate map as \cite{TSTeng} in  ${P_k}$-element for convection-diffusion problems.    
Since we neither solve the ODEs nor use numerical quadrature, our scheme can be precisely implemented to realize the theoretical results.  
It is, therefore, a genuinely stable Lagrange--Galerkin scheme. 
Our convergence results are 
of $O(\Delta t + h^2 + h^{k+1})$ in $\ell^\infty(L^2)$-norm and 
of $O(\Delta t + h^2 + h^k)$ in $\ell^\infty (H^1)$-norm.  
They are best possible in both norms for $P_1$-element and in $\ell^\infty(H^1)$-norm for $P_2$-element

The contents of this paper are as follows.  
In the next section we describe the convection-diffusion problem and some preparation.
In section \ref{sec:scheme2}, after recalling the conventional Lagrange--Galerkin scheme, we present 
our genuinely stable Lagrange--Galerkin scheme. 
In section \ref{sec:mainresults} we show stability and convergence results, which are proved in section \ref{sec:proofs}.
In section \ref{sec:numer} we show some numerical results, which reflect the theoretical convergence order.  
In section \ref{sec:conclusion} we give conclusions.
\section{Preliminaries}
We state the problem and prepare notation used throughout this paper.

Let $\Omega$ be a polygonal or polyhedral domain of $\R^d ~ (d=2,3)$ and
$T>0$ be a time.
We use the Sobolev spaces $L^p(\Omega)$ with the norm $\normsob{\cdot}{0}{p}$, 
$\sobw{s}{p}(\Omega)$ and $W^{s,p}_0(\Omega)$ with the norm $\normsob{\cdot}{s}{p}$ and the semi-norm $\snormsob{\cdot}{s}{p}$ for $1\leq p \leq \infty$ and a positive integer $s$.
We will write $H^s(\Omega) = W^{s,2}(\Omega)$ and drop the subscript $p=2$ in the corresponding norms.
The $L^2$-norm $\normsob{\cdot}{0}{2}$ is simply denoted by $\mynorm{\cdot}$.
The dual space of $H^1_0(\Omega)$ is denoted by $H^{-1}(\Omega)$.
For the vector-valued function $w\in W^{1,\infty}(\Omega)^d$ we define the semi-norm $\snormsobi{w}{1}{\infty}$ 
by
\begin{equation*}
    \Biggl\|
	\biggl\{
         \sum_{i,j=1}^d \Bigl( \henbi{w_i}{x_j} \Bigr)^2
    \biggr\}^{1/2}
    \Biggr\|_{0,\infty}.
\end{equation*}
The parenthesis $(\cdot , \cdot)$ shows the $L^2$-inner product $(f,g) \equiv \int_{\Omega} f g ~ dx$.
For a Sobolev space $X(\Omega)$ 
we use abbreviations 
$H^m(X)=H^m(0,T;X(\Omega))$ and $C(X)=C([0,T];X(\Omega))$.
We define a function space $Z^m(t_1,t_2)$ by
\begin{equation*}
\begin{split}
Z^m(t_1,t_2) &\equiv \{
	f\in H^j(t_1, t_2;H^{m-j}(\Omega))
	; j=0,\dots,m , 
	\mynorm{f}_{Z^m(t_1,t_2)}<\infty
	\}
	, \\
\mynorm{f}_{Z^m(t_1,t_2)} &\equiv
\biggl\{ \sum_{j=0}^m \mynorm{f}_{H^j(t_1,t_2;H^{m-j})}^2 \biggr\}^{1/2}
\end{split}
\end{equation*}
and denote $Z^m(0,T)$ by $Z^m$. 

We consider the convection-diffusion problem:
find $\phi: \Omega \times (0,T) \to \mathbb R$
such that 
\begin{subequations}\label{convdifeq}
\begin{align}
	\henbi{\phi}{t} + u\cdot \nabla \phi - \nu \Delta \phi &= f,   \qquad (x,t) \in \Omega \times (0,T), 	\label{convdifeqa}\\
	\phi &= 0, \qquad (x,t) \in \partial \Omega \times (0,T), \label{convdifeqb}\\
	\phi &= \phi^0,  \qquad x\in \Omega, \, t = 0, 	\label{convdifeqc}
\end{align}
\end{subequations}
where 
$\partial \Omega$ is the boundary of $\Omega$ and
$\nu >0$ is a diffusion constant which is less than or equal to a given $\nu_0$. 
Functions 
$u: \Omega \times (0,T) \to \mathbb R^d$, 
$f\in C(L^2)$ and 
$\phi^0\in C(\bar \Omega)$
are given.
\begin{remark}
As usual, in place of (\ref{convdifeqb}), we can deal with the inhomogeneous boundary condition $\phi=g$ by replacing the unknown function $\phi$ by $\widetilde \phi \equiv \phi - \widetilde g$ if the function $g$ defined on $\partial \Omega \times (0,T)$ can be extended to a function $\widetilde g$ in $\Omega \times (0,T)$ appropriately.
\end{remark}

Let 
$\Delta t>0$ be a time increment, $N_T \equiv \lfloor T/\Delta t \rfloor$, 
$t^n \equiv n\Delta t$ and 
$\psi^n \equiv \psi(\cdot,t^n)$ for a function $\psi$ defined in $\Omega \times (0,T)$.
For a set of functions 
$\psi=\set{\psi^n}_{n=0}^{N_T}$, 
two norms 
$\mynorm{\cdot}_{\ell^\infty(L^2)}$ and $\mynorm{\cdot}_{\ell^2(n_1, n_2;L^2)}$
are defined by
\begin{equation*}
\begin{split}
	\norm{\psi}_{\ell^\infty(L^2)} &\equiv 
	\max\{\norm{\psi^n};n=0,\dots ,N_T\}, \\
	\norm{\psi}_{\ell^2(n_1,n_2;L^2)} &\equiv 
	\left(
	\Delta t \sum_{n=n_1}^{n_2} \norm{\psi^n}^2 
	\right)^{1/2}
\end{split}
\end{equation*}
and 
denote $\mynorm{\psi}_{\ell^2(1,N_T;L^2)}$ by $\mynorm{\psi}_{\ell^2(L^2)}$.

Let $u$ be smooth. 
The characteristic curve $X(t; x,s)$ 
is defined by the solution of the system of the ordinary differential equations, 
\begin{subequations}
\begin{align}
	\frac{dX}{dt}(t; x,s)&=u(X(t;x,s),t), \quad t<s,\\
	X(s;x,s)&=x.
\end{align}
\end{subequations}
Then, we can write the material derivative term $\henbi{\phi}{t}+u\cdot \nabla \phi$ as
\begin{equation*}
\left(\henbi{\phi}{t} +u \cdot \nabla \phi \right)(X(t),t)
=\frac{d}{dt} \phi(X(t),t).
\end{equation*}
For $w:\Omega \to \R^d$ we define the mapping $X_1(w):\Omega \to \R^d$ by
\begin{equation}\label{eq:x1def}
(X_1(w))(x) \equiv x - w(x)\Delta t.
\end{equation}
\begin{remark}
The image of $x$ by $X_1(u(\cdot,t))$ is nothing but the backward Euler approximation
of $X(t-\Delta t;x,t)$.
\end{remark}
The symbol $\circ$ stands for the composition of functions, 
e.g., $(g\circ f)(x) \equiv g(f(x))$.

Let $\mathcal T_h \equiv \{ K \} $ be a triangulation of $\bar \Omega$ and
$h \equiv \max_{K\in \mathcal{T}_h} \operatorname{diam}(K)$ be the maximum element size.  
Throughout this paper we consider a regular family of triangulations $\{ \mathcal{T}_h  \}_{h\downarrow 0}$.  
Let $k$ be a fixed positive integer and 
$V_h\subset H_0^1(\Omega)$ be the $\pk k$-finite element space, 
\[
  V_h \equiv \{  v_h \in C({\bar \Omega}) \cap H_0^1(\Omega); ~v_{h |K} \in P_k(K), ~\forall K \in   \mathcal{T}_h   \}  ,  
\]
where $\pk k (K)$ is the set of polynomials on $K$ whose degrees are less than or equal to $k$.
Let $\pro{\phi}\in V_h$ be the Poisson projection of $\phi\in H_0^1(\Omega)$ defined by  
\begin{equation}\label{eq:poissonProDef}
 (\nabla (\pro{\phi}-\phi),\nabla \psi_h)=0, \quad \forall \psi_h\in V_h.
\end{equation}

We use $c$ to represent a generic positive constant independent of $h$, $\Delta t$, $\nu$, $f$ and $\phi$ which may take different values at different places.
The notation $c(A)$ means that $c$ depends on a positive parameter $A$ and that $c$ increases monotonically when $A$ increases.
The constants $c_0$, $c_1$ and $c_2$ stand for $c_0=c(\mynorm{u}_{C(L^{\infty})})$, $c_1=c(\mynorm{u}_{C(W^{1,\infty})})$ and $c_2=c(\mynorm{u}_{C(W^{2,\infty})})$.
We also use fixed positive constants $\cuint$ and $\done$  defined in Lemma \ref{lemm:interpolation} in the next section  
and in Lemma \ref{jacobiest} in Section \ref{sec:proofs}, respectively.  
\section{
A genuinely stable Lagrange--Galerkin scheme 
}\label{sec:scheme2}

The conventional Lagrange--Galerkin scheme, which we call \schemezero,  is 
described as follows.
\begin{Thschemezero}
Let $\phi_h^0 = \pro{\phi}^0$.
Find $\set{\phi_h^n}_{n=1}^{N_T} \subset V_h$ 
such that 
for $ n=1,\dots ,N_T$ 
\begin{equation}\label{eq:scheme0}
\begin{split}
	\left(  \frac{\phi_h^{n} - \phi_h^{n-1} \circ X_{1}^{n} }{\Delta t},\psi_h \right)
	+ \nu (\nabla \phi_h^{n},\nabla \psi_h)  
	= (f^{n}, \psi_h), \quad 
	\forall \psi_h\in V_h, 
\end{split}
\end{equation}
where $X_1^{n}=X_1(u^{n})$.
\end{Thschemezero}
For this scheme error estimates 
\begin{equation}\label{eq:estimateScheme1}
\begin{split}
	\mynorm{\phi_h-\phi}_{\ell^\infty(L^2)} &\leq c(h^k+\Delta t), ~ c(1/\nu)(h^{k+1}+\Delta t),\\
	\mynorm{\phi_h-\phi}_{\ell^\infty(H^1)} &\leq c(1/\nu)(h^k+\Delta t)
\end{split}
\end{equation}
are proved in \cite{DouglasRussell1982}, 
where the composite function term
	$( \phi_h^{n-1} \circ X_1^{n}, \psi_h )$
is assumed to be exactly integrated.

Although the function $\phi_h^{n-1}$ is a polynomial on each element $K$, the composite function $\phi_h^{n-1} \circ X_1^{n}$ is not a polynomial on $K$ in general 
since the image $X_1^{n}(K)$ of an element $K$ may spread over plural elements.
Hence, it is hard to calculate the composite function term $( \phi_h^{n-1} \circ X_1^{n}, \psi_h )$ exactly.
In practice, the following numerical quadrature has been used.
Let $g:K \to \R$ be a continuous function.  
A numerical quadrature $I_h[g;K]$ of $\int_K g \, dx$ is defined by 
\begin{equation}\label{eq:numericalQuadrature}
  I_h[g;K] \equiv \meas{K} \sum_{i=1}^{N_q} w_i \ g(a_i), 
\end{equation}
where $N_q$ is the number of quadrature points and $(w_i, a_i)\in \R \times K$ is a pair of weight and point for $i=1,\dots, N_q$.
We call the practical scheme using numerical quadrature \jurai.   
\begin{Thschemejurai}
Let $\phi_h^0 = \pro{\phi}^0$.
Find $\set{\phi_h^n}_{n=1}^{N_T} \subset V_h$ 
such that 
for $ n=1,\dots ,N_T$ 
\begin{equation}\label{eq:scheme1}
\begin{split}
	\frac{1}{\Delta t}(\phi_h^{n},\psi_h) 
	- \frac{1}{\Delta t} \sum_{K \in \mathcal T_h} I_h[(\phi_h^{n-1} \circ X_{1}^{n})\psi_h;K]
	& + \nu (\nabla \phi_h^{n},\nabla \psi_h)  
	\\ = (f^{n}, \psi_h),
	 & \qquad \forall \psi_h\in V_h,  
\end{split}
\end{equation}
where $X_1^{n}=X_1(u^{n})$.
\end{Thschemejurai}
It is reported that the results (\ref{eq:estimateScheme1}) do not hold for \jurai \ \cite{MPS,Tabata2007,TabataFujima2006,TSTeng}.

We denote by $\Pi_h^{(1)}$ the Lagrange interpolation operator to the $\pk 1$-finite element space. 
The following lemma is well-known \cite{Ciarlet}.   
\begin{lemma}\label{lemm:interpolation}
\begin{enumerate}
	\item 
	There exists a positive constant $\cint$ such that for
	$w \in W^{2,\infty}(\Omega)^d$  
	\begin{equation*}
		\normsobi{\Pi_h^{(1)} w -w}{0}{\infty} \leq \cint  h^{2} \snormsob{w}{2}{\infty}.
	\end{equation*}
	\item There exists a positive constant $\cuint \geq 1$ such that for $w\in W^{1,\infty}(\Omega)^d$ 
	\begin{equation*}
		\snormsobi{\Pi_h^{(1)} w}{1}{\infty} \leq \cuint \snormsob{w}{1}{\infty}.
	\end{equation*}
\end{enumerate}
\end{lemma}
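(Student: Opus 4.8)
The plan is to prove both bounds by the classical reference-element technique: transform to a fixed reference simplex, argue there with a Bramble--Hilbert type inequality, and scale back, using the shape-regularity of $\triseq$ to keep every constant independent of $h$. Since the Lagrange interpolant acts componentwise and the vector seminorms appearing in the statement are, up to factors depending only on $d$, the maxima over the components $w_i$ and the relevant partial derivatives of the corresponding scalar $L^\infty$-quantities, it suffices to treat a scalar function $v$ and to work element by element. Throughout, for $K\in\mathcal T_h$ I write $F_K\colon\hat K\to K$ for the affine map from the reference simplex $\hat K$, $h_K\equiv\operatorname{diam}(K)$, and $\hat v\equiv v\circ F_K$; shape-regularity gives the standard bounds on $F_K'$ and $(F_K^{-1})'$ in terms of $h_K$.

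\emph{Part (i).} Lagrange interpolation commutes with affine changes of variables, so $(\Pi_h^{(1)} v)|_K\circ F_K=\hat\Pi\,\hat v$, where $\hat\Pi$ is the $\pk1$ interpolation operator on $\hat K$. The operator $\hat\Pi$ is exact on $\pk1(\hat K)$ and bounded from $W^{2,\infty}(\hat K)$ (which embeds into $C(\hat K)$, so nodal values make sense) into $W^{0,\infty}(\hat K)$; hence the Bramble--Hilbert lemma yields $\normi{\hat v-\hat\Pi\hat v}_{0,\infty,\hat K}\le c\,\absi{\hat v}_{2,\infty,\hat K}$. The $L^\infty$-norm is invariant under the change of variables, while shape-regularity gives $\absi{\hat v}_{2,\infty,\hat K}\le c\,h_K^{2}\,\absi{v}_{2,\infty,K}$. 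Combining these and taking the maximum over $K\in\mathcal T_h$ proves the first inequality with a constant $\cint$ independent of $h$.

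\emph{Part (ii).} Pass again to $\hat K$. The key observation is that $\hat\Pi$ reproduces constants, so for every constant $a$ one has $\hat\Pi\hat v=\hat\Pi(\hat v-a)+a$ and therefore $\absi{\hat\Pi\hat v}_{1,\infty,\hat K}=\absi{\hat\Pi(\hat v-a)}_{1,\infty,\hat K}\le\normi{\hat\Pi(\hat v-a)}_{1,\infty,\hat K}\le c\,\normi{\hat v-a}_{0,\infty,\hat K}$, the last step because $\hat\Pi$ uses only point values and maps into the finite-dimensional space $\pk1(\hat K)$, on which all norms are equivalent. Choosing $a$ to be the mean of $\hat v$ over $\hat K$ and invoking a Poincaré inequality on $\hat K$ gives $\normi{\hat v-a}_{0,\infty,\hat K}\le c\,\absi{\hat v}_{1,\infty,\hat K}$, whence $\absi{\hat\Pi\hat v}_{1,\infty,\hat K}\le c\,\absi{\hat v}_{1,\infty,\hat K}$. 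Scaling the $W^{1,\infty}$-seminorms, namely $\absi{(\Pi_h^{(1)} v)|_K}_{1,\infty,K}\le c\,h_K^{-1}\absi{\hat\Pi\hat v}_{1,\infty,\hat K}$ and $\absi{\hat v}_{1,\infty,\hat K}\le c\,h_K\,\absi{v}_{1,\infty,K}$, the powers of $h_K$ cancel, and taking the maximum over $K\in\mathcal T_h$ gives $\snormsobi{\Pi_h^{(1)} w}{1}{\infty}\le\cuint\,\snormsob{w}{1}{\infty}$; enlarging the constant if necessary, we may assume $\cuint\ge1$.

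The scaling estimates and the norm equivalence on $\pk1(\hat K)$ are entirely routine, so there is no serious analytic obstacle; the only substantive ingredient is the combination, in part (ii), of constant-reproduction with the Poincaré inequality, which is exactly what makes the right-hand side the \emph{seminorm} of $w$ rather than its full norm. The main (mild) task is bookkeeping: one must verify that the reference-element constants coming from Bramble--Hilbert, from the norm equivalence on $\pk1(\hat K)$, and from the shape-regularity of $\triseq$ are all absorbed into the $h$-independent constants $\cint$ and $\cuint$.
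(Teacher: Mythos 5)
Your proof is correct. The paper offers no proof of this lemma --- it is stated as well-known and attributed to \cite{Ciarlet} --- and your reference-element argument (affine equivalence plus Bramble--Hilbert for (i); constant reproduction, the $L^\infty$-Poincar\'e inequality on $\hat K$, and norm equivalence on the finite-dimensional space $\pk{1}(\hat K)$ for (ii), with the powers of $h_K$ cancelling under shape-regularity) is precisely the standard proof supplied by that reference.
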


We now present our genuinely stable scheme GSLG, which is free from quadrature and exactly computable.  
We define a locally linearized velocity $u_h$ and a mapping $X_{1h}^n$ by
\begin{equation*}
	u_h \equiv \Pi_h^{(1)} u, \quad 
	X_{1h}^n \equiv X_1(u_h^n).
\end{equation*}
\begin{Thschemezettai}
  Let $\phi_h^0 = \pro{\phi}^0$.
  Find $\set{\phi_h^n}_{n=1}^{N_T} \subset V_h$ 
  such that 
  for $n=1,\dots ,N_T$ 
  \begin{equation}\label{eq:scheme2}
  \begin{split}
	\left(  \frac{\phi_h^{n} - \phi_h^{n-1} \circ X_{1h}^{n} }{\Delta t},\psi_h \right)
	+ \nu (\nabla \phi_h^{n},\nabla \psi_h) 
	= (f^{n}, \psi_h), \quad
	\forall \psi_h\in V_h. 
  \end{split}
  \end{equation}
\end{Thschemezettai}
%
%
We 
show that the integration
 $(\phi_h^{n-1}\circ X_{1h}^{n},\psi_h)$
can be calculated exactly.
\onefig{
	\includegraphics[height=30mm]{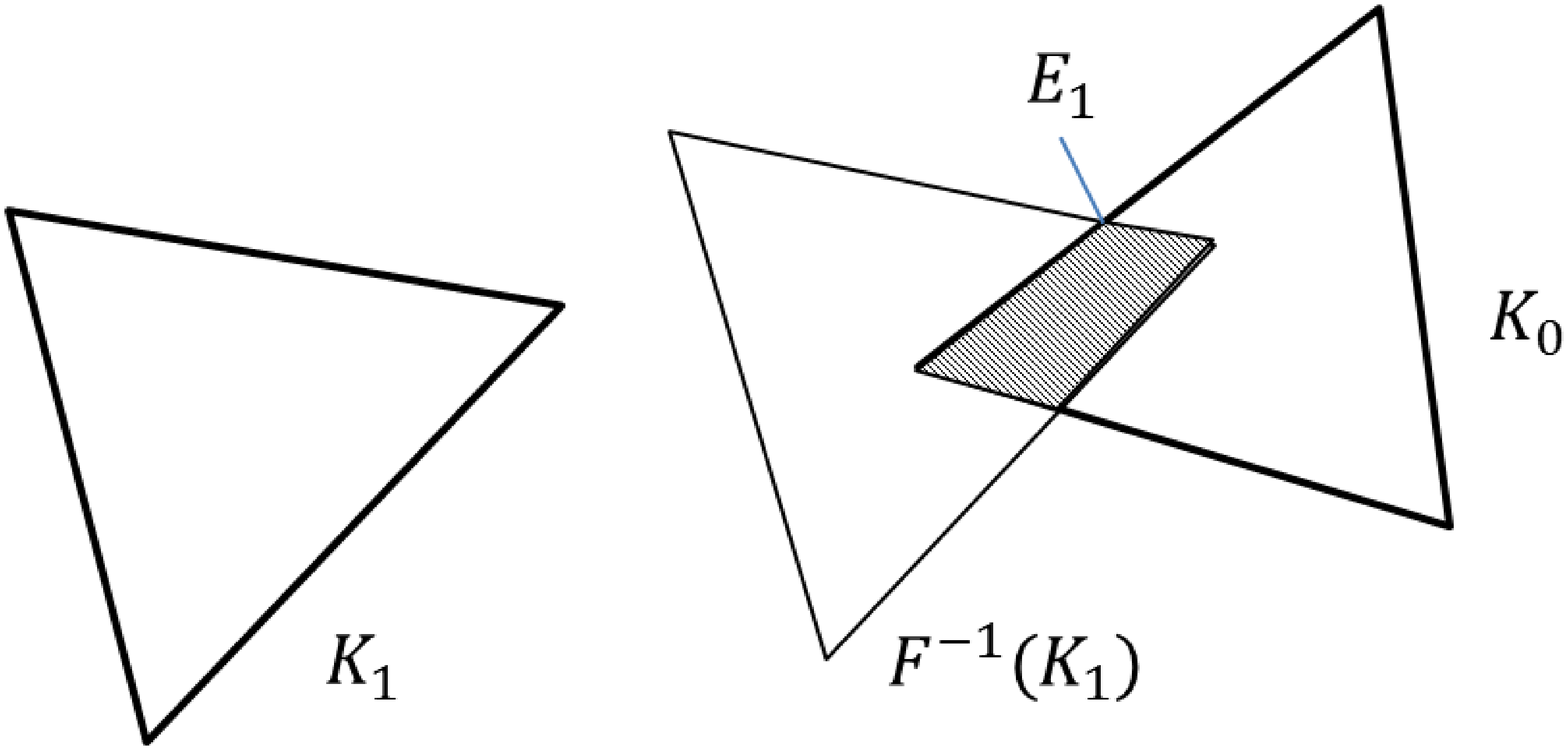}
	\caption{Elements $K_0$, $K_1$ and a polygon $E_1$}
	\label{fig:el}
}
At first we prepare two lemmas.  
The next lemma on the mapping \eqref{eq:x1def} is proved in {\cite{RuiTabata2002}}.  
\begin{lemma}[{\cite[Proposition 1]{RuiTabata2002}}]\label{lemm:bijective}
  Suppose 
  \begin{equation}
  w\in W_0^{1,\infty}(\Omega)^d \text{ and } \Delta t  \snormsob{w}{1}{\infty} < 1.
  \end{equation}
  Let 
  $F\equiv X_1(w)$ be the mapping defined in (\ref{eq:x1def}). 
  Then, 
  $F :\Omega \to \Omega$ is bijective.
\end{lemma}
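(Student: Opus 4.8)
The plan is to reduce the assertion to a statement on all of $\R^d$ by extending $w$ by zero, to prove that the extended map is a bijection of $\R^d$ because it is a small Lipschitz perturbation of the identity, and then to recover the statement for $\Omega$ from the fact that the extension leaves the complement of $\Omega$ pointwise fixed.

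First I would put $L\equiv\snormsob{w}{1}{\infty}$ and let $\tilde w$ denote the extension of $w$ by $0$ outside $\Omega$. Since $\partial\Omega$ is Lipschitz and $w\in W_0^{1,\infty}(\Omega)^d$ has vanishing trace, the zero extension satisfies $\tilde w\in W^{1,\infty}(\R^d)^d$ with $\snormsobi{\tilde w}{1}{\infty}=L$; as $\R^d$ is convex this yields the pointwise bound $\abs{\tilde w(x)-\tilde w(y)}\le L\abs{x-y}$ for all $x,y\in\R^d$, and in particular the continuous representative of $w$ vanishes on $\partial\Omega$. Define $\tilde F(x)\equiv x-\tilde w(x)\Delta t$ on $\R^d$; it coincides with $F$ on $\Omega$ and with the identity on $\R^d\setminus\Omega$.

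Next I would show that $\tilde F:\R^d\to\R^d$ is bijective. Injectivity: $\abs{\tilde F(x)-\tilde F(y)}\ge\abs{x-y}-\Delta t\abs{\tilde w(x)-\tilde w(y)}\ge(1-\Delta t\,L)\abs{x-y}$, which is strictly positive whenever $x\neq y$ by the hypothesis $\Delta t\,L<1$. Surjectivity: for a given $y\in\R^d$ the mapping $x\mapsto y+\tilde w(x)\Delta t$ is a contraction of the complete metric space $\R^d$ with Lipschitz ratio $\Delta t\,L<1$, hence by the Banach fixed point theorem it has a fixed point $x^\ast$, and then $\tilde F(x^\ast)=y$. (Alternatively, invariance of domain shows $\tilde F$ is open, and the lower Lipschitz bound shows it is proper, so $\tilde F(\R^d)$ is nonempty, open and closed, hence all of $\R^d$.)

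Finally, since $\tilde F$ is the identity on $\R^d\setminus\Omega$, we have $\tilde F(\R^d\setminus\Omega)=\R^d\setminus\Omega$; because $\tilde F$ is a bijection of $\R^d$, passing to complements gives $\tilde F(\Omega)=\Omega$, and restriction yields that $F:\Omega\to\Omega$ is bijective. The step I expect to carry the real content is the surjectivity onto $\Omega$ — as opposed to onto merely some open subset of $\R^d$: this is precisely where the hypothesis $w\in W_0^{1,\infty}$ is used, since the vanishing of $w$ on $\partial\Omega$ is what both keeps the zero extension Lipschitz and pins $\tilde F$ to the identity outside $\Omega$, which makes the complement argument valid even when $\Omega$ is non-convex. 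Injectivity and the contraction estimate are straightforward consequences of $\Delta t\,L<1$.
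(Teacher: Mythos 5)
Your argument is correct: the zero extension of $w\in W_0^{1,\infty}(\Omega)^d$ is globally Lipschitz with the same seminorm, the perturbed identity is a bijection of $\R^d$ by the contraction estimate and the Banach fixed point theorem, and since the extension fixes $\R^d\setminus\Omega$ pointwise the complement argument pins the image of $\Omega$ to $\Omega$ itself. The paper gives no proof here, deferring entirely to \cite[Proposition 1]{RuiTabata2002}, and the proof there rests on essentially the same two ingredients you identify — the smallness condition $\Delta t\,\snormsob{w}{1}{\infty}<1$ for invertibility and the vanishing of $w$ on $\partial\Omega$ to keep the image inside $\Omega$ — so your route matches the intended one.
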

\begin{lemma}\label{lemm:polygon}
 Let $K_0, K_1\in \mathcal T_h$ and $F:K_0\to \R^d$ be linear and one-to-one.
 Let 
 $
	E_1 \equiv K_0 \cap F^{-1} (K_1)
 $
 and $\meas{E_1}>0$. 
 Then, the following hold.
 \begin{enumerate}
 \item $E_1$ is a polygon ($d=2$) or a polyhedron ($d=3$).
 \item $\phi_h \circ F_{|E_1}\in \pk k (E_1), \quad \forall \phi_h \in \pk k(K_1)$.
 \end{enumerate}
\end{lemma}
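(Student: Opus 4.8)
The plan is to use that $F$, being linear and one-to-one on the full-dimensional simplex $K_0$, is the restriction of an affine isomorphism of $\R^d$. Write $F(x) = Ax + b$; since $K_0$ has nonempty interior and $F$ is injective there, the matrix $A \in \R^{d\times d}$ is nonsingular, and we may regard $F$ as a bijective affine map on all of $\R^d$. This makes the set $F^{-1}(K_1)$ unambiguous (a priori $F$ is defined only on $K_0$, so $F^{-1}(K_1) \subseteq K_0$ and in fact $E_1 = F^{-1}(K_1)$).

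For (i), I would write the target simplex as a finite intersection of $d+1$ closed half-spaces, $K_1 = \bigcap_{i=1}^{d+1}\{y \in \R^d : \ell_i(y) \geq 0\}$ with each $\ell_i$ affine and nonconstant. Then $F^{-1}(K_1) = \bigcap_{i=1}^{d+1}\{x : (\ell_i \circ F)(x) \geq 0\}$, and each $\ell_i \circ F$ is affine with linear part $A^{\mathrm T}\nabla\ell_i \neq 0$ because $A$ is nonsingular; hence each of these sets is again a closed half-space, and $F^{-1}(K_1)$ is a convex polyhedral set. Intersecting with $K_0$, which is itself bounded and a finite intersection of half-spaces, shows that $E_1$ is a bounded convex polyhedral set. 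The hypothesis $\meas{E_1} > 0$ forces $E_1$ to have nonempty interior, so it is a genuine (nondegenerate) polygon when $d = 2$ and a polyhedron when $d = 3$; this is precisely what excludes the degenerate cases in which the intersection collapses to a lower-dimensional set.

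For (ii), every $\phi_h \in \pk k(K_1)$ is the restriction to $K_1$ of a polynomial $\widetilde\phi_h$ of total degree at most $k$ on $\R^d$. Composition with the affine map $F$ does not increase the total degree, so $\widetilde\phi_h \circ F \in \pk k(\R^d)$. Since $F(E_1) \subseteq K_1$ by the definition of $E_1$, we have $\phi_h \circ F_{|E_1} = (\widetilde\phi_h \circ F)_{|E_1}$, which therefore belongs to $\pk k(E_1)$.

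I do not expect a genuine obstacle: the statement merely records elementary facts about affine maps and convex polytopes. The only points needing a little care are the nonsingularity of the linear part $A$ — which is what makes preimages of half-spaces be half-spaces in (i) and guarantees that the image of $F$ is full-dimensional — and the correct reading of $F^{-1}(K_1)$, which is handled by the extension of $F$ to an affine bijection of $\R^d$ described above.
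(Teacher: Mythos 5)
Your proof is correct and follows essentially the same route as the paper: extend $F$ to an affine bijection of $\R^d$, observe that the preimage of the simplex $K_1$ is again a convex polytope (the paper simply notes it is a triangle/tetrahedron, while you spell this out via half-spaces and the nonsingularity of the linear part), and conclude (ii) from the fact that composing a degree-$k$ polynomial with an affine map preserves the degree. The extra care you take about the meaning of $F^{-1}(K_1)$ and the nondegeneracy forced by $\meas{E_1}>0$ is sound but not a departure from the paper's argument.
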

\begin{proof}
(i) Since both $K_0$ and $F^{-1}(K_1)$ are triangles ($d=2$) or tetrahedra ($d=3$), the intersection is a polygon or a polyhedron.
See Fig. \ref{fig:el}.
\par
(ii) $F\in \pk 1 (K_0)^{d}$ implies that $F\in \pk 1(E_1)^d$
and it holds that 
$F(E_1)\subset K_1$.  Hence, $\phi_h \circ F_{|E_1}$ is well defined and
$\phi_h \circ F_{|E_1} \in \pk k (E_1)$.
\end{proof}
\begin{proposition}
  Let $\phi_h$, $\psi_h\in V_h$, $w\in W^{1,\infty}_0(\Omega)$ and 
  $X_{1h}\equiv X_1(\Pi_h^{(1)} w)$, where $X_1$ is the operator defined in (\ref{eq:x1def}).
  Suppose 
  $ \cuint \Delta t \snormsob{w}{1}{\infty} <1$.
  Then, $\int_{\Omega} (\phi_h \circ X_{1h}) \psi_h dx$ is exactly computable.
\end{proposition}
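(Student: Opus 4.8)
The plan is to reduce the integral to a finite sum of integrals of polynomials over polygons (or polyhedra), each of which is evaluated exactly by a subdivision into simplices together with a suitable exact quadrature rule. The scheme of the argument is: (i) show $X_{1h}$ is globally bijective; (ii) localize to a single element $K_0$, where $X_{1h}$ is affine, and invoke Lemma~\ref{lemm:polygon}; (iii) split the integral over the resulting polygonal pieces and integrate the polynomial integrand exactly.

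First I would verify that $F \equiv X_{1h} = X_1(\Pi_h^{(1)}w)$ maps $\Omega$ bijectively onto $\Omega$. Since $d \le 3$, the Lagrange interpolant $\Pi_h^{(1)}w$ is well defined and is continuous, piecewise affine, and vanishes on $\partial\Omega$, hence $\Pi_h^{(1)}w \in W^{1,\infty}_0(\Omega)^d$. By Lemma~\ref{lemm:interpolation}(ii), $\Delta t\,\snormsob{\Pi_h^{(1)}w}{1}{\infty} \le \cuint \Delta t\,\snormsob{w}{1}{\infty} < 1$, so Lemma~\ref{lemm:bijective}, applied with $\Pi_h^{(1)}w$ in place of $w$, gives that $F:\Omega\to\Omega$ is bijective.

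Next, fix $K_0\in\mathcal T_h$. Because $\Pi_h^{(1)}w$ is affine on $K_0$, the restriction $F_{|K_0}(x) = x - \Delta t\,(\Pi_h^{(1)}w)(x)$ is affine, and it is one-to-one since $F$ is globally injective; in particular $F(K_0)$ is a simplex. Up to sets of measure zero one has $\Omega = \bigcup_{K_0\in\mathcal T_h} K_0$ and $K_0 = \bigcup_{K_1\in\mathcal T_h} E_{K_0,K_1}$ with $E_{K_0,K_1} \equiv K_0\cap F^{-1}(K_1) = (F_{|K_0})^{-1}\bigl(K_1\cap F(K_0)\bigr)$. For every pair with $\meas{E_{K_0,K_1}}>0$, Lemma~\ref{lemm:polygon} applied to the affine, one-to-one map $F_{|K_0}$ shows that $E_{K_0,K_1}$ is a polygon ($d=2$) or polyhedron ($d=3$) and that $\phi_h\circ F_{|E_{K_0,K_1}}\in \pk k(E_{K_0,K_1})$; moreover $E_{K_0,K_1}$ itself is obtained constructively, by clipping $K_1$ against the simplex $F(K_0)$ and pulling the result back by the affine map $F_{|K_0}^{-1}$. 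Then I would write
\begin{equation*}
  \int_{\Omega}(\phi_h\circ F)\,\psi_h\,dx
  = \sum_{K_0\in\mathcal T_h}\ \sum_{K_1\in\mathcal T_h}\ \int_{E_{K_0,K_1}}(\phi_h\circ F)\,\psi_h\,dx ,
\end{equation*}
a finite sum. On each $E_{K_0,K_1}$ the integrand is the product of $\phi_h\circ F_{|E_{K_0,K_1}}\in\pk k$ and $\psi_{h|K_0}\in\pk k$, hence a polynomial of degree at most $2k$; subdividing the polygon or polyhedron $E_{K_0,K_1}$ into simplices and using a quadrature rule exact on simplices for degree $2k$ computes each term, and therefore the whole integral, exactly.

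The main obstacle is the localization step: $F$ is globally only piecewise affine, so Lemma~\ref{lemm:polygon} cannot be invoked on $\Omega$ directly. One must first establish the global bijectivity of $F$ in order to conclude that each element-wise restriction $F_{|K_0}$ is one-to-one (so that the hypotheses of Lemma~\ref{lemm:polygon} are met on every $K_0$), and for that one has to check that $\Pi_h^{(1)}w$ inherits the hypotheses of Lemma~\ref{lemm:bijective}, namely membership in $W^{1,\infty}_0(\Omega)^d$ and the smallness condition $\Delta t\,\snormsob{\Pi_h^{(1)}w}{1}{\infty}<1$, the latter being exactly where the constant $\cuint$ from Lemma~\ref{lemm:interpolation}(ii) enters the hypothesis $\cuint\Delta t\,\snormsob{w}{1}{\infty}<1$.
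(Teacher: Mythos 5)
Your proposal is correct and follows essentially the same route as the paper: establish global bijectivity of $X_{1h}$ via Lemma \ref{lemm:bijective} combined with the interpolation stability bound $\Delta t\,\snormsobi{\Pi_h^{(1)}w}{1}{\infty}\le \cuint\Delta t\,\snormsobi{w}{1}{\infty}<1$, localize to an element $K_0$ where the map is affine, decompose $K_0$ into the pieces $K_0\cap X_{1h}^{-1}(K_l)$, and apply Lemma \ref{lemm:polygon} to conclude the integrand is piecewise polynomial on polygons/polyhedra. The extra remarks about simplicial subdivision and exact quadrature of degree $2k$ only make explicit what the paper leaves implicit in ``we can execute the exact integration.''
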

\begin{proof}
It is sufficient to show that $\int_{K_0} (\phi_h \circ X_{1h}) \psi_h dx$ can be computable exactly for any $K_0 \in \mathcal T_h$.
The mapping $X_{1h}:\Omega \to \Omega$ is bijective 
since we can apply Lemma \ref{lemm:bijective} thanks to
\begin{equation}\label{eq:dtModified}
	\Delta t  \snormsobi{\Pi_h^{(1)} w}{1}{\infty}
\le 
	\cuint	\Delta t  \snormsobi{w}{1}{\infty}
	< 1 .
\end{equation} 
Let $\Lambda(K_0) \equiv \set{l; K_0\cap X_{1h}^{-1}(K_l) \not = \emptyset}$ and $E_l \equiv K_0 \cap X_{1h}^{-1}(K_l)$ for $l\in \Lambda(K_0)$.
Noting that 
\begin{equation*}
	\bigcup_{l\in \Lambda(K_0)} E_l
	=K_0 \cap \bigcup_{l\in \Lambda(K_0)} X_{1h}^{-1}(K_l)
	= K_0
\end{equation*}
and that $\meas{E_l\cap E_m}=0$ for $l \not=m$,
$l,m \in \Lambda (K_0)$, 
we can divide 
the integration on $K_0$ into the sum of those on $E_l$ for $l \in \Lambda(K_0)$, 
\begin{equation*}
	\int_{K_0} (\phi_h\circ X_{1h}) \psi_h dx
	= \sum_{l\in \Lambda(K_0)} \int_{E_l} (\phi_h\circ X_{1h}) \psi_h dx.
\end{equation*}
Since Lemma \ref{lemm:polygon} with $F=X_{1h}$ implies that
both $\phi_h\circ X_{1h}$ and $\psi_h$ are polynomials on $E_l$, we can execute the exact integration.
\end{proof}
\begin{remark}
In the case of $d=2$, 
Priestley \cite{Priestley1994} approximated
$X(t^{n-1};x,t^n)$ 
by
\begin{equation*}
	\widetilde X_{1h}(x) = B_1 \lambda_1(x)+B_2 \lambda_2(x)+B_3 \lambda_3(x), ~ x\in K_0
\end{equation*}
on each $K_0 \in \mathcal T_h$, where $B_i=X(t^{n-1};A_i,t^n)$, $\set{A_i}_{i=1}^3$ are vertices of $K_0$ and $\set{\lambda_i}_{i=1}^3$ are the barycentric coordinates of $K_0$ with respect to $\set{A_i}_{i=1}^3$.
Since $\widetilde X_{1h}(x)$ is linear in $K_0$, the decomposition
\begin{align*}
	\int_{K_0} (\phi_h \circ \widetilde X_{1h}) \psi_h dx
	= \sum_{l \in \Lambda(K_0)} \int_{E_l} (\phi_h \circ \widetilde X_{1h}) \psi_h dx, \\
	\Lambda(K_0) \equiv \set{l; K_0\cap \widetilde X_{1h}^{-1}(K_l) \not = \emptyset},
	E_l \equiv K_0 \cap \widetilde X_{1h}^{-1}(K_l) 
\end{align*}
makes the exact integration possible. 
However, $B_i = X(t^{n-1};A_i,t^n)$ are the solutions of a system of ordinary differential equations and
they cannot be solved exactly in general.
In practice, some numerical method, e.g., Runge--Kutta method, is required, which introduces another error.
\end{remark}
%
%
%
\section{Main results}\label{sec:mainresults}
We show the main results, the stability and convergence of \zettai.
\begin{hypo} \label{hypo:uonly}
	(i) 	$u\in C((W_0^{1,\infty})^d)$,  
	(ii) 
	$u\in C((W_0^{1,\infty} \cap W^{2,\infty})^d)$.
\end{hypo}
\begin{hypo} \label{hypo:phi}
	$\phi \in H^1(H^{k+1}) \cap Z^2$.
\end{hypo}
\begin{hypo}\label{hypo:hypothesis1}
The time increment $\Delta t$ satisfies
$0<\Delta t\leq \dtgiven$,
where 
\begin{equation}
\dtgiven \equiv \frac{\done}{\cuint \matrixnorm{u}},
\end{equation}
and
$\cuint$ and $\done$ are the constants stated in 
Lemma \ref{lemm:interpolation} (Section \ref{sec:scheme2}) and Lemma \ref{jacobiest} (Section \ref{sec:proofs}), 
respectively.
\end{hypo}
\begin{hypo}\label{hypo:poissonProjection}
	There exists a positive constant $\cpro$ such that, for $\psi\in H^{k+1}(\Omega) \cap H_0^1(\Omega)$, 
	\begin{equation}
		\normsobi{\pro{\psi}-\psi}{0}{2} \leq \cpro h^{k+1} \normsob{\psi}{k+1}{2}, 
	\label{h2regularity}
	\end{equation}
	where $\pro{\psi}$ is the Poisson projection defined in (\ref{eq:poissonProDef}).
\end{hypo}
\begin{remark}
(i) It is well-known that the $H^1$-estimate 
	\begin{equation}
		\normsobi{\pro{\psi}-\psi}{1}{2} \leq \cpro h^{k} \normsob{\psi}{k+1}{2}
         \label{h1estimate}
		\end{equation}
	holds without any specific condition.  
    On the other hand, Hypothesis \ref{hypo:poissonProjection} holds, for example, if $\Omega$ is convex, by Aubin--Nitsche lemma \cite{Ciarlet}. \\
(ii) Hypothesis \ref{hypo:phi} implies $\phi \in C(H^{k+1})$ and $\phi^0\in H^{k+1}(\Omega)$.
\end{remark}
\begin{theorem}\label{theo:mainStability}
Suppose Hypotheses
\ref{hypo:uonly}-(i) 
and \ref{hypo:hypothesis1}.  
Then,
there exists a positive constant 
$\cprop$ independent of $h, \Delta t, \nu, \phi$ and $f$
such that 
\begin{equation*}
	\mynorm{\phi_h}_{\ell^\infty (L^2)}
	+ \sqrt{\nu} \mynorm{\nabla \phi_h}_{\ell^2(L^2)}
	\leq \cprop \left( \mynorm{\phi_h^0}+\mynorm{f}_{\ell^2(L^2)} \right).
\end{equation*}
\end{theorem}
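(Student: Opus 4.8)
The plan is to test the scheme \eqref{eq:scheme2} with $\psi_h = \phi_h^n$ and run a discrete Gronwall argument in $\ell^\infty(L^2)$, exactly as in the classical analysis of Lagrange--Galerkin methods, the only new point being that the composite-function term is now evaluated along the approximate map $X_{1h}^n = X_1(u_h^n)$ rather than along $X_1^n$. Choosing $\psi_h = \phi_h^n$ gives
\begin{equation*}
	\norm{\phi_h^n}^2 - (\phi_h^{n-1}\circ X_{1h}^n, \phi_h^n) + \nu \Delta t \norm{\nabla \phi_h^n}^2 = \Delta t\, (f^n, \phi_h^n).
\end{equation*}
The term $(\phi_h^{n-1}\circ X_{1h}^n, \phi_h^n)$ is bounded by $\norm{\phi_h^{n-1}\circ X_{1h}^n}\,\norm{\phi_h^n}$ via Cauchy--Schwarz, so the whole estimate hinges on controlling $\norm{\phi_h^{n-1}\circ X_{1h}^n}$ in terms of $\norm{\phi_h^{n-1}}$.

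The key lemma is a change-of-variables / Jacobian estimate: since $\cuint \Delta t\,\snormsobi{u^n}{1}{\infty} < 1$ by Hypothesis \ref{hypo:hypothesis1} (using $\dtgiven = \done/(\cuint\matrixnorm{u})$ and Lemma \ref{lemm:interpolation}(ii)), the map $X_{1h}^n$ is bijective on $\Omega$ by Lemma \ref{lemm:bijective}, and on each element its Jacobian is $\det(I - \Delta t\,\nabla u_h^n)$, which equals $1 + O(\Delta t\,\snormsobi{u_h^n}{1}{\infty}) = 1 + O(\Delta t)$; this is precisely the content of Lemma \ref{jacobiest} referenced in Hypothesis \ref{hypo:hypothesis1}, which will give
\begin{equation*}
	\norm{\phi_h^{n-1}\circ X_{1h}^n} \le (1 + c_1 \Delta t)\,\norm{\phi_h^{n-1}}
\end{equation*}
for $\Delta t \le \dtgiven$. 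Substituting this into Cauchy--Schwarz, then using the elementary inequality $ab \le \tfrac12 a^2 + \tfrac12 b^2$ on $(1+c_1\Delta t)\norm{\phi_h^{n-1}}\cdot\norm{\phi_h^n}$, and bounding the right-hand side via $\Delta t\,(f^n,\phi_h^n) \le \tfrac{\Delta t}{2}\norm{f^n}^2 + \tfrac{\Delta t}{2}\norm{\phi_h^n}^2$, I obtain after absorbing the $\norm{\phi_h^n}^2$ terms on the left (possible since $c_1\Delta t$ and $\Delta t$ are small)
\begin{equation*}
	\norm{\phi_h^n}^2 + \nu \Delta t\,\norm{\nabla\phi_h^n}^2 \le (1 + c\,\Delta t)\,\norm{\phi_h^{n-1}}^2 + c\,\Delta t\,\norm{f^n}^2.
\end{equation*}

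Summing over $n$ from $1$ to $m$ and applying the discrete Gronwall inequality yields
\begin{equation*}
	\norm{\phi_h^m}^2 + \nu \sum_{n=1}^{m}\Delta t\,\norm{\nabla\phi_h^n}^2 \le e^{cT}\Bigl(\norm{\phi_h^0}^2 + c\norm{f}_{\ell^2(L^2)}^2\Bigr),
\end{equation*}
and taking the maximum over $m$ and a square root gives the claimed bound with $\cprop$ depending only on $T$ and $\norm{u}_{C(L^\infty)}$ (through $c_1$) — in particular independent of $h$, $\Delta t$, $\nu$, $\phi$, and $f$. The main obstacle, and the place where the ``genuine stability'' of the scheme is really used, is the Jacobian estimate for $X_{1h}^n$: one must verify that the locally linearized velocity $u_h^n = \Pi_h^{(1)}u^n$ still produces a map whose Jacobian is $1 + O(\Delta t)$ uniformly, with the constant controlled by $\matrixnorm{u}$ rather than by $\snormsobi{u_h^n}{1}{\infty}$ itself; this is exactly why Hypothesis \ref{hypo:hypothesis1} is phrased through $\matrixnorm{u}$ and $\cuint$, and it is Lemma \ref{jacobiest} that closes this gap. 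Everything else is the standard energy argument, and notably no numerical-quadrature error term appears because, by the Proposition above, the composite term is integrated exactly.
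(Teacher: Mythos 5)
Your argument is correct and is essentially the paper's own proof, which is only sketched there: test \eqref{eq:scheme2} with $\psi_h=\phi_h^n$, control $\mynorm{\phi_h^{n-1}\circ X_{1h}^n}\le(1+c_1\Delta t)\mynorm{\phi_h^{n-1}}$ via Lemma \ref{lemm:bijective_1} applied to $w=u_h^n$ (admissible because $\Delta t\,\snormsobi{u_h^n}{1}{\infty}\le\cuint\Delta t\,\matrixnorm{u}<1$ under Hypothesis \ref{hypo:hypothesis1}), then Young's inequality and the discrete Gronwall lemma. The only cosmetic difference is that you re-derive that composite-function bound from the Jacobian estimate rather than citing Lemma \ref{lemm:bijective_1} directly, and the resulting constant depends on $\mynorm{u}_{C(W^{1,\infty})}$ (i.e.\ it is $c_1$), not merely on $\mynorm{u}_{C(L^{\infty})}$.
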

\begin{theorem}\label{theo:mainConvergence}
Suppose Hypotheses \ref{hypo:uonly}-(ii), \ref{hypo:phi} and \ref{hypo:hypothesis1}. \\
(i)
There exists a positive constant 
$\cprop$ independent of $h, \Delta t, \nu$ and $\phi$
such that 
\begin{equation}\label{eq:mainest1}
\begin{split}
	& \norm{\phi-\phi_h}_{\ell^\infty(L^2)}
	+ \sqrt{\nu}\norm{\nabla(\phi - \phi_h)}_{\ell^2(L^2)} \\ 
	\leq & \cprop \biggl\{ 
	\Delta t \mynorm{ \phi }_{Z^2}  
	+ h^{k} 
	\Bigl(  
		\norm{ \henbi{\phi}{t} }_{L^2(H^{k+1})} 
		  +\mynorm{\phi}_{\ell^\infty(H^{k+1})}
		 + \mynorm{\phi}_{\ell^2(0,N_T;H^{k+1})}
	\Bigr) \\
	&+ h^2 \mynorm{\nabla \phi}_{\ell^2(0,N_T-1;L^2)}
	\biggr\}.
\end{split}\end{equation}
(ii)
There exists a positive constant $\cproptwo$ independent of $h,\Delta t,\phi$ (but dependent on $1/\nu$) such that 
\begin{equation}\label{eq:mainest2}
\begin{split}
	&\norm{\phi-\phi_h}_{\ell^\infty(H^1)}
	\leq \cproptwo
	\biggl\{ 
	\Delta t \mynorm{ \phi }_{Z^2}
	+ h^{k} 
	\biggl(  
	\norm{ \henbi{\phi}{t} }_{L^2(H^{k+1})}  
	+\mynorm{\phi}_{\ell^\infty(H^{k+1})} \\
	&+ \mynorm{\phi}_{\ell^2(0,N_T-1;H^{k+1})}
	\biggr) 
	+ h^2 \mynorm{\nabla \phi}_{\ell^2(0,N_T-1;L^2)}
	\biggr\}.
\end{split}
\end{equation}
(iii)
Moreover, suppose 
Hypothesis \ref{hypo:poissonProjection}.
Then, there exists a positive constant $\cpropthree$ independent of $h, \Delta t, \nu$ and $\phi$ such that 
\begin{equation}\label{eq:mainest3}
\begin{split}
	\norm{\phi-\phi_h}_{\ell^\infty(L^2)}
	& \leq \cpropthree 
	\biggl\{
	 \Delta t \mynorm{ \phi }_{Z^2} 
	+ h^{k+1} \Bigl(  
		\norm{ \henbi{\phi}{t} }_{L^2(H^{k+1})} 
		 +\mynorm{\phi}_{\ell^\infty(H^{k+1})} \\
		&+ \nu^{-1/2} \mynorm{\phi}_{\ell^2(0,N_T-1;H^{k+1})}
	\Bigr) 
	+ h^2 \mynorm{\nabla \phi}_{\ell^2(0,N_T-1;L^2)}
	\biggr\}.
\end{split}\end{equation}
\end{theorem}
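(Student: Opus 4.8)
The plan is to estimate the discrete error and then conclude by the triangle inequality. I write $\phi-\phi_h=\proe+\eone$, where $\proe\equiv\phi-\pro{\phi}$ is the Poisson-projection error and $\eone\equiv\pro{\phi}-\phi_h\in V_h$ the discrete error; note $\eone^0=0$ since $\phi_h^0=\pro{\phi}^0$. Evaluating the weak form of \eqref{convdifeq} at $t=t^n$, writing the material derivative along the exact characteristic curve as $\henbi{\phi^n}{t}+u^n\cdot\nabla\phi^n=(\phi^n-\phi^{n-1}\circ X_1^n)/\Delta t+R_1^n$ (with $X_1^n\equiv X_1(u^n)$), passing to the locally linearized foot $X_{1h}^n$ through $\phi^{n-1}\circ X_1^n-\phi^{n-1}\circ X_{1h}^n=\Delta t\,R_2^n$, and subtracting \eqref{eq:scheme2}, the cross term $\nu(\nabla\proe^n,\nabla\psi_h)$ vanishes by the defining property \eqref{eq:poissonProDef} of the Poisson projection, and I am left with the error equation
\[
\Bigl(\frac{\eone^n-\eone^{n-1}\circ X_{1h}^n}{\Delta t},\psi_h\Bigr)+\nu(\nabla\eone^n,\nabla\psi_h)=-\Bigl(\frac{\proe^n-\proe^{n-1}\circ X_{1h}^n}{\Delta t},\psi_h\Bigr)-(R_1^n-R_2^n,\psi_h),\qquad\forall\psi_h\in V_h.
\]

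Four ingredients will feed the right-hand side. First, the standard consistency estimate for the characteristics discretization (Taylor expansion of $t\mapsto\phi(X(t;x,t^n),t)$ together with the backward Euler error of the foot, using Hypothesis \ref{hypo:phi}; cf. \cite{RuiTabata2002}) gives $\mynorm{R_1}_{\ell^2(L^2)}\le c\,\Delta t\,\mynorm{\phi}_{Z^2}$. Second, since $X_1^n(x)-X_{1h}^n(x)=(u_h^n(x)-u^n(x))\Delta t$ has modulus at most $\cint h^2\snormsob{u^n}{2}{\infty}\Delta t$ by Lemma \ref{lemm:interpolation}(i), writing $\phi^{n-1}\circ X_1^n-\phi^{n-1}\circ X_{1h}^n$ as a line integral of $\nabla\phi^{n-1}$ over the joining segment and changing variables (the intermediate maps being perturbations of the identity with Jacobian bounded uniformly in $h$ under Hypotheses \ref{hypo:uonly}-(ii) and \ref{hypo:hypothesis1}) yields $\mynorm{R_2}_{\ell^2(L^2)}\le c\,h^2\mynorm{\nabla\phi}_{\ell^2(0,N_T-1;L^2)}$. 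Third, Lemmas \ref{lemm:bijective} and \ref{jacobiest} with Hypothesis \ref{hypo:hypothesis1} make $X_{1h}^n:\Omega\to\Omega$ bijective and give the change-of-variables bound $\mynorm{v\circ X_{1h}^n}\le(1+\cuone\Delta t)^{1/2}\mynorm{v}$ for $v\in L^2(\Omega)$. Fourth, the Poisson-projection estimates $\mynorm{\nabla\proe^n}\le c\,h^k\normsob{\phi^n}{k+1}{2}$ and $\mynorm{\proe^n}\le c\,h^k\normsob{\phi^n}{k+1}{2}$ hold unconditionally, the latter improving to $\mynorm{\proe^n}\le c\,h^{k+1}\normsob{\phi^n}{k+1}{2}$ (and similarly for $\partial_t\proe$ in time) under Hypothesis \ref{hypo:poissonProjection}.

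For part (i) I would test the error equation with $\psi_h=\eone^n$, use $2(a-b,a)\ge\mynorm{a}^2-\mynorm{b}^2$ and the third ingredient to reach $\mynorm{\eone^n}^2-(1+\cuone\Delta t)\mynorm{\eone^{n-1}}^2+2\nu\Delta t\mynorm{\nabla\eone^n}^2\le 2\Delta t\bigl(\mynorm{R_1^n}+\mynorm{R_2^n}+\Delta t^{-1}\mynorm{\proe^n-\proe^{n-1}\circ X_{1h}^n}\bigr)\mynorm{\eone^n}$, split $\proe^n-\proe^{n-1}\circ X_{1h}^n=(\proe^n-\proe^{n-1})+(\proe^{n-1}-\proe^{n-1}\circ X_{1h}^n)$ with first piece $\le\Delta t^{1/2}\mynorm{\partial_t\proe}_{L^2(t^{n-1},t^n;L^2)}$ and second $\le c\Delta t\mynorm{\nabla\proe^{n-1}}$ (line-integral argument as above), sum over $n$, apply the discrete Gronwall inequality (the factor $1+\cuone\Delta t$ is harmless since $N_T\Delta t\le T$), and use $\eone^0=0$; adding $\mynorm{\proe}_{\ell^\infty(L^2)}+\sqrt{\nu}\,\mynorm{\nabla\proe}_{\ell^2(L^2)}\le c\,h^k\bigl(\mynorm{\phi}_{\ell^\infty(H^{k+1})}+\mynorm{\phi}_{\ell^2(0,N_T;H^{k+1})}\bigr)$ (using $\nu\le\nu_0$) then gives \eqref{eq:mainest1}. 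For part (ii) I would test instead with the backward difference quotient $\bwd\eone^n\equiv(\eone^n-\eone^{n-1})/\Delta t$: the diffusion term telescopes into $\tfrac{\nu}{2\Delta t}\bigl(\mynorm{\nabla\eone^n}^2-\mynorm{\nabla\eone^{n-1}}^2\bigr)$ plus a nonnegative remainder, while the commutator $\eone^{n-1}\circ X_{1h}^n-\eone^{n-1}$ (of size $c\Delta t\mynorm{\nabla\eone^{n-1}}$) and every right-hand side term are absorbed by Young's inequality; after summation, discrete Gronwall, division by $\nu$ (the source of the $1/\nu$-dependence) and the bound $\mynorm{\nabla\proe}_{\ell^\infty(L^2)}\le c\,h^k\mynorm{\phi}_{\ell^\infty(H^{k+1})}$, this gives \eqref{eq:mainest2}. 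For part (iii) I would repeat (i), but to gain one power of $h$ I would not bound $\mynorm{\proe^{n-1}-\proe^{n-1}\circ X_{1h}^n}$ directly; instead I change variables in $(\proe^{n-1}\circ X_{1h}^n,\eone^n)$ so that $(\proe^{n-1}-\proe^{n-1}\circ X_{1h}^n,\eone^n)=\bigl(\proe^{n-1},\,\eone^n-J_h^n\,\eone^n\circ(X_{1h}^n)^{-1}\bigr)$ with $J_h^n$ the Jacobian of $(X_{1h}^n)^{-1}$, bound the second factor by $c\Delta t\bigl(\mynorm{\nabla\eone^n}+\mynorm{\eone^n}\bigr)$ and use the sharp $L^2$ estimate $\mynorm{\proe^{n-1}}\le c\,h^{k+1}\normsob{\phi^{n-1}}{k+1}{2}$ from Hypothesis \ref{hypo:poissonProjection}, and then absorb $\Delta t\mynorm{\nabla\eone^n}$ into the left-hand diffusion term $\nu\Delta t\mynorm{\nabla\eone^n}^2$ by Young's inequality --- which is the origin of the factor $\nu^{-1/2}$ in front of $\mynorm{\phi}_{\ell^2(0,N_T-1;H^{k+1})}$ in \eqref{eq:mainest3}; adding $\mynorm{\proe}_{\ell^\infty(L^2)}\le c\,h^{k+1}\mynorm{\phi}_{\ell^\infty(H^{k+1})}$ completes the argument.

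The main obstacle --- the feature absent from the analysis of \schemezero\ in \cite{DouglasRussell1982} --- is the second ingredient above: quantifying the error produced by using the $\pk{1}$-interpolant $u_h^n$ in place of $u^n$ at the characteristic foot, with all constants uniform in $h$. This hinges on Lemma \ref{lemm:interpolation}(i) for the displacement $\Delta t\abs{u^n-u_h^n}$, a mean-value argument on $\phi^{n-1}$, and a change of variables whose Jacobian is controlled independently of $h$ --- the content of Lemma \ref{jacobiest}, available thanks to Hypotheses \ref{hypo:uonly}-(ii) and \ref{hypo:hypothesis1}. The secondary obstacle is part (iii), where $\proe^{n-1}-\proe^{n-1}\circ X_{1h}^n$ carries no usable pointwise derivative bound on $\proe^{n-1}$, so the extra power of $h$ must be recovered by the change-of-variables/absorption device above, at the price of the $\nu$-dependent constant in \eqref{eq:mainest3}. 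Everything else --- the telescoping, the discrete Gronwall bookkeeping with the $1+\cuone\Delta t$ factors, and the collection of interpolation and projection estimates --- is routine.
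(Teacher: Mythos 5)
Your proposal is correct and follows essentially the same route as the paper: the same splitting $\phi-\phi_h=\proe+\eone$ via the Poisson projection (killing the diffusion cross term), the same four residuals (your single term $(\proe^n-\proe^{n-1}\circ X_{1h}^n)/\Delta t$ is split exactly into the paper's $R_3^n$ and $R_4^n$), testing with $\eone^n$ for (i)/(iii) and with $\bwd\eone^n$ plus the commutator $R_5^n$ for (ii), and the discrete Gronwall inequality. The only cosmetic difference is in (iii), where you reprove the $H^{-1}$ bound of Lemma \ref{lemm:DouglasRussell} inline by duality and change of variables rather than citing it; the absorption into $\nu\mynorm{\nabla\eone^n}^2$ and the resulting $\nu^{-1/2}$ factor are identical to the paper's argument.
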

\begin{remark}
From Theorem \ref{theo:mainConvergence}, 
we have 
\begin{equation*}
\begin{split}
	\mynorm{\phi-\phi_h}_{\ell^\infty(L^2)}&\leq c(\Delta t + h^2 + h^k ), \quad
	c\left(\Delta t + h^2 +  \frac{1}{\sqrt{\nu}}h^{k+1}\right)\\
	\mynorm{\phi-\phi_h}_{\ell^\infty (H^1)}&\leq c\left(\frac{1}{\nu}\right)(\Delta t + h^2 + h^k ).
\end{split}
\end{equation*}
	In the case of $\pk k$-element, $k=1,2$, the estimate (\ref{eq:mainest1}) shows the optimal 
	$L^2$-convergence rate $O(\Delta t + h^k)$ independent of $\nu$.
	The dependency on $\nu$ in (\ref{eq:mainest2}) and (\ref{eq:mainest3}) is also inevitable in \schemezero.
\end{remark}
\section{Proofs of main theorems}\label{sec:proofs}
We recall some results used in proving main theorems. 
For their proofs we only show outlines or refer to the bibliography.  
\begin{lemma}[{\cite[Lemma 1]{RuiTabata2002}}] \label{lemm:bijective_1}
  Suppose 
  $w\in W_0^{1,\infty}(\Omega)^d $ and  
  \begin{equation}
  \Delta t  \snormsob{w}{1}{\infty} <1.
  \end{equation}
  Let 
  $F\equiv X_1(w)$ be the mapping defined in (\ref{eq:x1def}). 
  Then, 
  there exists a positive constant $c(\snormsob{w}{1}{\infty})$ such that for $\psi\in L^2(\Omega)$ 
  \begin{equation*}
  	\mynorm{\psi \circ F} 
	\leq (1+ c \Delta t) \mynorm{\psi}.
  \end{equation*}
\end{lemma}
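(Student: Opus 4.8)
The natural route is a change of variables, exactly as for the analogous statement about smooth characteristic flows. First I would note that since $\Delta t\,\snormsob{w}{1}{\infty}<1$, Lemma~\ref{lemm:bijective} applies and $F=X_1(w):\Omega\to\Omega$ is a bijection; moreover $w\in W^{1,\infty}(\Omega)^d$ makes $F$ Lipschitz, hence differentiable a.e.\ with $\nabla F(x)=I-\Delta t\,\nabla w(x)$ for a.e.\ $x$. The key elementary estimate is a uniform lower bound on the Jacobian: at a.e.\ $x$ the Frobenius norm of $\nabla w(x)$ is at most $\snormsob{w}{1}{\infty}$, so every eigenvalue $\mu_i$ of $\Delta t\,\nabla w(x)$ satisfies $|\mu_i|\le \Delta t\,\snormsob{w}{1}{\infty}<1$ (spectral radius $\le$ Frobenius norm), and therefore
\[
 |\det \nabla F(x)| = \prod_{i=1}^d |1-\mu_i| \ \ge\ \prod_{i=1}^d (1-|\mu_i|) \ \ge\ (1-\Delta t\,\snormsob{w}{1}{\infty})^d \ >\ 0 .
\]

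Next I would perform the substitution $y=F(x)$ in $\mynorm{\psi\circ F}^2=\int_\Omega |\psi(F(x))|^2\,dx$. Using the change-of-variables (area) formula for the injective Lipschitz map $F$, with the integrand rewritten as $|\psi(F(x))|^2|\det\nabla F(x)|\cdot|\det\nabla F(x)|^{-1}$, one gets
\[
 \mynorm{\psi\circ F}^2 = \int_\Omega \frac{|\psi(y)|^2}{|\det\nabla F(F^{-1}(y))|}\,dy \ \le\ \bigl(1-\Delta t\,\snormsob{w}{1}{\infty}\bigr)^{-d}\,\mynorm{\psi}^2 .
\]
Finally, since $\Delta t\,\snormsob{w}{1}{\infty}<1$, the scalar inequality $(1-a)^{-d/2}\le 1+c\,a$ (valid on the relevant range, with $c$ monotone in $\snormsob{w}{1}{\infty}$ — this is where the dependence $c=c(\snormsob{w}{1}{\infty})$ enters) yields $\mynorm{\psi\circ F}\le (1+c\,\Delta t)\mynorm{\psi}$, which is the claim.

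The only point that requires genuine care, rather than routine computation, is the change-of-variables step for a map that is merely Lipschitz rather than $C^1$: one should invoke the area formula for injective Lipschitz maps (or, equivalently, approximate $w$ by smooth vector fields $w_\eps$, for which $F_\eps$ is a $C^1$-diffeomorphism onto its image, prove the estimate with the uniform Jacobian bound above, and pass to the limit $\eps\to 0$). Everything else—the eigenvalue bound, the determinant lower bound, and the concluding scalar inequality—is elementary.
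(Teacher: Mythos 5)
Your route---bijectivity from Lemma \ref{lemm:bijective}, a pointwise lower bound on $\det\nabla F$, and the substitution $y=F(x)$ via the area formula for injective Lipschitz maps---is exactly the standard argument behind this lemma; the paper gives no proof and simply cites \cite{RuiTabata2002}, whose proof is of this change-of-variables type. The eigenvalue bound, the determinant estimate $\abs{\det\nabla F}\ge(1-\Delta t\snormsob{w}{1}{\infty})^d$, and your remark about handling the merely Lipschitz map are all fine.

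The step you should not wave through is the last one. The inequality $(1-a)^{-d/2}\le 1+ca$ holds on any range $0\le a\le a_0$ with $a_0<1$, but the constant necessarily depends on $a_0$ and blows up as $a_0\to1$. Under the hypothesis as literally stated, $a=\Delta t\snormsob{w}{1}{\infty}$ is only known to be $<1$, so $(1-a)^{-d/2}$ is not controlled by $1+c(\snormsob{w}{1}{\infty})\Delta t$: letting $\Delta t\uparrow 1/\snormsob{w}{1}{\infty}$ the left side diverges while the right side stays bounded. (A one-dimensional example with $w'=\snormsob{w}{1}{\infty}$ on a subinterval and $\psi$ the indicator of its image shows the conclusion itself fails in this regime, so no proof can rescue the literal statement.) Thus ``valid on the relevant range'' hides a real restriction: one needs $\Delta t\snormsob{w}{1}{\infty}$ bounded away from $1$, e.g.\ $\Delta t\snormsob{w}{1}{\infty}\le\done$ as in Lemma \ref{jacobiest} and Hypothesis \ref{hypo:hypothesis1}, which is how the lemma is actually invoked throughout the paper. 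With that proviso your argument closes; a slightly cleaner route to the same end is to expand $\det(I-\Delta t\nabla w)=1-\Delta t\diverge w+O(\Delta t^2)\ge 1-c(\snormsob{w}{1}{\infty})\Delta t$ directly, which exhibits the $1+c\Delta t$ structure without passing through $(1-a)^{-d}$, though it requires the same smallness of $\Delta t\snormsob{w}{1}{\infty}$ to invert.
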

The proof is given in \cite{RuiTabata2002}.  
\begin{lemma}\label{jacobiest}
	There exists a constant $\done \in (0,1)$ such that, for $w\in W^{1,\infty}_0(\Omega)^d$ and 
	$\Delta t$ satisfying $\Delta t\abs{w}_{1,\infty} \leq \done$,
	\begin{equation*}
		\frac12 \leq \jac{X_1(w)}{x} \leq \frac32, 
	\end{equation*}
	where $|\partial X_1(w)/\partial x|$ is the Jacobian of the mapping $X_1(w)$ defined in (\ref{eq:x1def}).
\end{lemma}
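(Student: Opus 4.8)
The plan is to write the Jacobian matrix explicitly and reduce the claim to an elementary perturbation estimate for determinants. From \eqref{eq:x1def}, $X_1(w)(x) = x - \Delta t\, w(x)$, so for a.e.\ $x\in\Omega$ the Jacobian matrix of $X_1(w)$ is $\partial X_1(w)/\partial x = I - \Delta t\, Dw(x)$, where $I$ is the $d\times d$ identity and $Dw(x) = (\partial w_i/\partial x_j(x))_{i,j}$. Since $w\in W^{1,\infty}_0(\Omega)^d$, this is well defined a.e., and the definition of $\snormsobi{w}{1}{\infty}$ says precisely that the Frobenius norm of $Dw(x)$ is at most $\snormsobi{w}{1}{\infty}$ a.e.; combined with the elementary bound $\|M\|_2\le\|M\|_F$ for the spectral norm, this gives $\|\Delta t\, Dw(x)\|_2 \le \Delta t\,\snormsobi{w}{1}{\infty} \le \done$ a.e.

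Next I would estimate the determinant through the eigenvalues. Triangularizing $\Delta t\, Dw(x)$ over $\mathbb{C}$, write $\det\!\big(I - \Delta t\, Dw(x)\big) = \prod_{i=1}^d (1-\lambda_i)$, where $\lambda_1,\dots,\lambda_d\in\mathbb{C}$ are its eigenvalues counted with multiplicity. Each obeys $|\lambda_i| \le \|\Delta t\, Dw(x)\|_2 \le \done$, whence $1-\done \le |1-\lambda_i| \le 1+\done$, and taking the product yields
\begin{equation*}
	(1-\done)^d \ \le\ \jac{X_1(w)}{x} \ =\ \prod_{i=1}^d |1-\lambda_i| \ \le\ (1+\done)^d .
\end{equation*}
Then it suffices to fix $\done\in(0,1)$ small enough that $(1-\done)^d\ge\tfrac12$ and $(1+\done)^d\le\tfrac32$ for $d=2,3$; e.g.\ $\done = 1/7$ works, since $(8/7)^3<3/2$ and $(6/7)^3>1/2$. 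One can moreover note that the determinant never vanishes when $\Delta t\,\snormsobi{w}{1}{\infty}<1$ and equals $1$ at $\Delta t=0$, so it stays positive and the Jacobian determinant itself, not merely its absolute value, lies in $[1/2,3/2]$.

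I do not expect a genuine obstacle here: the only points needing care are that the semi-norm $\snormsobi{w}{1}{\infty}$ really dominates the pointwise spectral norm of $Dw(x)$, and that $\done$ be chosen uniformly over $d\in\{2,3\}$. An alternative to the eigenvalue argument is to expand $\det(I-\Delta t\,Dw(x))$ as its sum of principal minors, bound each $k\times k$ minor by $(\Delta t\,\snormsobi{w}{1}{\infty})^k$ times a dimensional constant via Hadamard's inequality, and then shrink $\done$ so that the total deviation from $1$ is at most $1/2$; this is marginally more computational but wholly routine.
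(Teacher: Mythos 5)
Your proposal is correct and starts from exactly the same observation as the paper, namely that $(\partial X_1(w)/\partial x)_{ij}=\delta_{ij}-\Delta t\,\partial w_i/\partial x_j$; the paper leaves the determinant estimate as "easily proved," and your eigenvalue argument (with the Frobenius-norm domination of the spectral radius and the explicit choice $\done=1/7$) is a clean, correct way to fill in that routine step.
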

Lemma \ref{jacobiest} is easily proved by the fact, 
\[   \left( \frac{ \partial X_1(w)}{\partial x} \right)_{ij}=\delta_{ij}-\Delta t ~\frac{\partial w_i}{\partial x_j}.  
\]
\begin{lemma} \label{lemm:twofunc}
Let $w_i\in W_0^{1,\infty}(\Omega)^d$ and $F_i\equiv X_1(w_i)$ be the mapping defined in (\ref{eq:x1def}) for $i=1,2$.
Under the condition  
$\Delta t \snormsob{w_i}{1}{\infty} \le \done$, $i=1,2$, 
we have 
for $\psi\in H^1(\Omega)$  
\begin{equation*}
	\mynorm{\psi\circ F_1 - \psi\circ F_2} \leq \sqrt{2} 
	\Delta t\normsob{w_1-w_2}{0}{\infty} 
	\mynorm{\nabla \psi}.
\end{equation*} 
\end{lemma}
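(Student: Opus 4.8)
The plan is to reduce to smooth $\psi$, apply the fundamental theorem of calculus along the segment joining $F_2(x)$ to $F_1(x)$, and then change variables to pull $\nabla\psi$ back onto $\Omega$. First I would take $\psi\in C^\infty(\bar\Omega)$ and recover the general case $\psi\in H^1(\Omega)$ at the end by density, which is available since $\Omega$ is polygonal/polyhedral: the right-hand side of the claimed inequality is continuous in the $H^1$-norm, and $\psi\mapsto\psi\circ F_i$ is bounded on $L^2(\Omega)$ by Lemma \ref{lemm:bijective_1}, so the left-hand side also passes to the limit. For $\theta\in[0,1]$ set $w_\theta\equiv\theta w_1+(1-\theta)w_2$ and $G_\theta\equiv X_1(w_\theta)$, so that $G_\theta(x)=F_2(x)+\theta\bigl(F_1(x)-F_2(x)\bigr)$ and $F_1(x)-F_2(x)=-\Delta t\,\bigl(w_1(x)-w_2(x)\bigr)$. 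Since $w_\theta\in W_0^{1,\infty}(\Omega)^d$ and $\snormsob{w_\theta}{1}{\infty}\le\theta\snormsob{w_1}{1}{\infty}+(1-\theta)\snormsob{w_2}{1}{\infty}$, the hypothesis $\Delta t\snormsob{w_i}{1}{\infty}\le\done$ gives $\Delta t\snormsob{w_\theta}{1}{\infty}\le\done<1$ uniformly in $\theta$; hence Lemma \ref{lemm:bijective} shows $G_\theta:\Omega\to\Omega$ is bijective, and Lemma \ref{jacobiest} gives $\jac{G_\theta}{x}\ge 1/2$ on $\Omega$.

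Next, since $\psi$ is smooth, $\psi(F_1(x))-\psi(F_2(x))=\int_0^1\nabla\psi(G_\theta(x))\cdot\bigl(F_1(x)-F_2(x)\bigr)\,d\theta$, so the Cauchy--Schwarz inequality together with $|F_1(x)-F_2(x)|\le\Delta t\normsob{w_1-w_2}{0}{\infty}$ yields
\[
  \bigl|\psi(F_1(x))-\psi(F_2(x))\bigr|^2\le\Delta t^2\normsob{w_1-w_2}{0}{\infty}^2\int_0^1\bigl|\nabla\psi(G_\theta(x))\bigr|^2\,d\theta.
\]
Integrating over $\Omega$, interchanging the two integrals, and changing variables $y=G_\theta(x)$ in the inner one — legitimate by the bijectivity above, with inverse Jacobian $\jac{G_\theta}{x}^{-1}\le 2$ — gives $\int_\Omega\bigl|\nabla\psi(G_\theta(x))\bigr|^2\,dx\le 2\mynorm{\nabla\psi}^2$ for every $\theta$, and therefore $\mynorm{\psi\circ F_1-\psi\circ F_2}^2\le 2\Delta t^2\normsob{w_1-w_2}{0}{\infty}^2\mynorm{\nabla\psi}^2$. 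Taking square roots gives the stated bound, and the density step above extends it to all $\psi\in H^1(\Omega)$.

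I expect the only genuinely delicate points to be bookkeeping rather than conceptual: checking that the convex combination $w_\theta$ satisfies the hypotheses of Lemmas \ref{lemm:bijective} and \ref{jacobiest} \emph{uniformly} in $\theta$, so that the change of variables applies to the whole family $\{G_\theta\}$ at once; and the density argument, where $\psi$ must be approximated simultaneously in $L^2$ (to pass to the limit on the left via Lemma \ref{lemm:bijective_1}) and in $H^1$ (for the right-hand side). Neither is a real obstacle — once the change of variables is in place, the estimate is essentially a one-line computation of the same flavour as the proof of Lemma \ref{lemm:bijective_1}.
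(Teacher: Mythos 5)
Your proof is correct and is essentially the paper's argument made explicit: the paper obtains the lemma by citing Achdou--Guermond (Lemma 4.5) together with the Jacobian bound of Lemma \ref{jacobiest}, and your derivation --- interpolating $w_\theta=\theta w_1+(1-\theta)w_2$ between the two velocities, applying the fundamental theorem of calculus along the segment $G_\theta(x)$, and changing variables with $\jac{G_\theta}{x}\ge 1/2$ to produce the factor $\sqrt{2}$ --- is exactly the content of that cited lemma in this setting. The uniform-in-$\theta$ verification of the hypotheses and the density step are handled correctly, so nothing is missing.
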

Lemma \ref{lemm:twofunc} is a direct consequence of \cite[Lemma 4.5]{AchdouGuermond2000} and Lemma \ref{jacobiest}.  
\begin{lemma}
\label{lemm:DouglasRussell}
Let $w\in W_0^{1,\infty}(\Omega)^d$ and $F\equiv X_1(w)$ be the mapping defined in (\ref{eq:x1def}). 
Under the condition $\Delta t \snormsob{w}{1}{\infty} \leq \done $,
there exists a positive constant 
$
c(\normsob{w}{1}{\infty})$
such that for $\psi \in L^2(\Omega)$
\begin{equation*}
\mynorm{\psi-\psi\circ F}_{H^{-1}(\Omega)} \leq 
c
\Delta t  \mynorm{\psi}.
\end{equation*}
\end{lemma}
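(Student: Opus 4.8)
\emph{Proof proposal.} The plan is to dualize the $H^{-1}$-norm, perform the change of variables $y=F(x)$, and reduce the resulting $L^2$-estimate to Lemma~\ref{lemm:twofunc}. Since $\Delta t\,\snormsob{w}{1}{\infty}\le\done<1$, Lemma~\ref{lemm:bijective_1} gives $\psi\circ F\in L^2(\Omega)$, so that $\psi-\psi\circ F\in L^2(\Omega)\subset H^{-1}(\Omega)$ and
\[
  \mynorm{\psi-\psi\circ F}_{H^{-1}(\Omega)}
  =\sup\bigl\{\,(\psi-\psi\circ F,\varphi)\ ;\ \varphi\in H^1_0(\Omega),\ \normsob{\varphi}{1}{2}\le1\,\bigr\}.
\]
Hence it suffices to prove $(\psi-\psi\circ F,\varphi)\le c(\normsob{w}{1}{\infty})\,\Delta t\,\mynorm{\psi}\,\mynorm{\nabla\varphi}$ for each $\varphi\in H^1_0(\Omega)$, and then bound $\mynorm{\nabla\varphi}$ by $\normsob{\varphi}{1}{2}$ via the Poincar\'e inequality.

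Fix such a $\varphi$. By Lemma~\ref{lemm:bijective}, $F\colon\Omega\to\Omega$ is bijective; moreover $w=0$ on $\partial\Omega$ forces $F=\mathrm{id}$ on $\partial\Omega$, the Jacobian matrix of $F$ equals $I-\Delta t\,(\partial w_i/\partial x_j)$, and the smallness condition $\Delta t\,\snormsob{w}{1}{\infty}\le\done$ makes $F$ bi-Lipschitz with the Jacobian matrices of $F$ and of $F^{-1}$ bounded in terms of $\done$ only, while Lemma~\ref{jacobiest} gives $1/2\le\jac{F}{x}\le 3/2$. Therefore the Lipschitz change of variables $y=F(x)$ is legitimate, and writing $G\equiv F^{-1}$ and $j(y)\equiv\jac{G}{y}$ we get
\[
  (\psi-\psi\circ F,\varphi)=\int_\Omega \psi\,\bigl[\,\varphi-(\varphi\circ G)\,j\,\bigr]\,dy
  \le\mynorm{\psi}\,\mynorm{\varphi-(\varphi\circ G)\,j}.
\]

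It remains to show $\mynorm{\varphi-(\varphi\circ G)\,j}\le c(\normsob{w}{1}{\infty})\,\Delta t\,\mynorm{\nabla\varphi}$, which I would do by splitting
\[
  \varphi-(\varphi\circ G)\,j=\bigl(\varphi-\varphi\circ G\bigr)+(1-j)(\varphi\circ G).
\]
Since $\partial G/\partial y=(I-\Delta t\,(\partial w/\partial x)\circ G)^{-1}$ and $\Delta t\,\snormsob{w}{1}{\infty}\le\done<1$, an elementary expansion of the determinant gives $\abs{1-j(y)}\le c(\normsob{w}{1}{\infty})\,\Delta t$ uniformly in $y$, and together with $\mynorm{\varphi\circ G}\le c\,\mynorm{\varphi}\le c\,\mynorm{\nabla\varphi}$ (change of variables plus Poincar\'e) the second term is $O(\Delta t\,\mynorm{\nabla\varphi})$. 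For the first term, set $\chi\equiv\varphi\circ F^{-1}$; because $F^{-1}$ is bi-Lipschitz and fixes $\partial\Omega$, one has $\chi\in H^1_0(\Omega)$ with $\mynorm{\nabla\chi}\le c\,\mynorm{\nabla\varphi}$, and $\chi\circ F=\varphi$, so $\varphi-\varphi\circ G=\chi\circ F-\chi$. Applying Lemma~\ref{lemm:twofunc} with $w_1=w$ and $w_2=0$ (so $F_1=F$, $F_2=\mathrm{id}$; the hypotheses hold since $\Delta t\,\snormsob{w}{1}{\infty}\le\done$) yields
\[
  \mynorm{\chi\circ F-\chi}\le\sqrt2\,\Delta t\,\normsob{w}{0}{\infty}\,\mynorm{\nabla\chi}
  \le c(\normsob{w}{1}{\infty})\,\Delta t\,\mynorm{\nabla\varphi}.
\]
Adding the two bounds and taking the supremum over $\varphi$ finishes the argument.

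The one delicate point is the term $\varphi-\varphi\circ F^{-1}$: Lemma~\ref{lemm:twofunc} cannot be applied to it directly, because $F^{-1}$ is not of the form $X_1(v)$ for a fixed velocity $v$, and the device that unblocks it is the substitution $\chi=\varphi\circ F^{-1}$, which turns it into $\chi\circ F-\chi$ at the modest cost of the bound $\mynorm{\nabla\chi}\le c\,\mynorm{\nabla\varphi}$. A minor bookkeeping issue is to make sure the single smallness threshold $\Delta t\,\snormsob{w}{1}{\infty}\le\done$ covers every change-of-variables step (bijectivity, the Jacobian bounds, the determinant expansion); if necessary one shrinks the constant $\done\in(0,1)$ of Lemma~\ref{jacobiest} once and for all, which affects nothing else in the paper.
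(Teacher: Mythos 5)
Your proposal is correct. The paper itself gives no argument here --- it simply defers to Lemma~1 of Douglas--Russell together with Lemma~\ref{jacobiest} --- and what you have written is a sound, self-contained reconstruction of essentially that standard argument: dualize the $H^{-1}$-norm, push the composition from the $L^2$-function $\psi$ onto the $H^1_0$ test function $\varphi$ by the change of variables $y=F(x)$, and then estimate $\varphi-(\varphi\circ F^{-1})\,j$ by splitting off the Jacobian factor $(1-j)$, which is $O(\Delta t\,\normsob{w}{1}{\infty})$ by expanding $\det(I-\Delta t\,\partial w/\partial x)$ and using Lemma~\ref{jacobiest} to control $1/\det$. All the supporting facts you invoke are available under the stated hypothesis: $\Delta t\snormsob{w}{1}{\infty}\le\done<1$ gives bijectivity (Lemma~\ref{lemm:bijective}), the two-sided Jacobian bound, and the bi-Lipschitz property of $F$ and $F^{-1}$, and $w\in W^{1,\infty}_0$ guarantees $F$ maps $\Omega$ onto $\Omega$. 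The one genuinely non-routine step --- that $\varphi-\varphi\circ F^{-1}$ is not directly covered by Lemma~\ref{lemm:twofunc} because $F^{-1}$ is not of the form $X_1(v)$ --- is resolved correctly by the substitution $\chi=\varphi\circ F^{-1}$, which converts it to $\chi\circ X_1(w)-\chi\circ X_1(0)$ with $\mynorm{\nabla\chi}\le c\,\mynorm{\nabla\varphi}$; note that Lemma~\ref{lemm:twofunc} only requires $\chi\in H^1(\Omega)$, so the (true but unnecessary) claim $\chi\in H^1_0(\Omega)$ can be dropped. No further shrinking of $\done$ is needed, since $\done<1$ already makes every Jacobian quantity you use uniformly bounded.
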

Lemma \ref{lemm:DouglasRussell} is obtained from \cite[Lemma 1]{DouglasRussell1982} and Lemma \ref{jacobiest}.  
\begin{lemma}[discrete Gronwall inequality] 
\label{lem:discreteGronwall}
Let $a_0$ and $a_1$ be non-negative numbers, $\Delta t \in (0,\frac{1}{2a_0}]$ be a real number, and $\set{x^n}_{n\geq 0}, \set{y^n}_{n\geq 1}$ and $\set{b^n}_{n\geq 1}$ be non-negative sequences.
Suppose
\begin{equation*}
	\frac{x^n-x^{n-1}}{\Delta t} + y^n \leq a_0 x^n + a_1 x^{n-1} + b^n, ~ \forall n \geq 1.
\end{equation*}
Then, it holds that 
\begin{equation*}
	x^n + \Delta t \sum_{i=1}^n y^i \leq \exp\set{(2a_0+a_1)n\Delta t} \left( x^0 + \Delta t \sum_{i=1}^n b^i \right), 
	~ \forall n \geq 1.
\end{equation*}
\end{lemma}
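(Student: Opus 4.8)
The plan is to collapse the cumulative dissipation term into the unknown and reduce everything to a scalar geometric recursion. Put $z^n \equiv x^n + \Delta t \sum_{i=1}^n y^i$, so that $z^0 = x^0$ and $x^n \le z^n$, $x^{n-1}\le z^{n-1}$ because the $y^i$ are non-negative. Taking the difference of consecutive terms and using the hypothesis,
\[
 z^n - z^{n-1} = (x^n - x^{n-1}) + \Delta t\, y^n \le \Delta t\,(a_0 x^n + a_1 x^{n-1} + b^n) \le \Delta t\,(a_0 z^n + a_1 z^{n-1} + b^n).
\]
Rearranging, $(1 - a_0\Delta t)\, z^n \le (1 + a_1\Delta t)\, z^{n-1} + \Delta t\, b^n$; here the assumption $\Delta t \le 1/(2a_0)$ enters for the first time, guaranteeing $1 - a_0\Delta t \ge 1/2 > 0$, so that we may divide and obtain $z^n \le \rho\, z^{n-1} + \frac{\Delta t}{1 - a_0\Delta t}\, b^n$ with $\rho \equiv (1 + a_1\Delta t)/(1 - a_0\Delta t) \ge 1$.

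Next I would iterate this one-step inequality from $n$ down to $0$, which yields $z^n \le \rho^n z^0 + \sum_{i=1}^n \rho^{\,n-i}\, \frac{\Delta t}{1 - a_0\Delta t}\, b^i$, and then replace the algebraic factors by the exponential bound in the statement. For that I use $1 + a_1\Delta t \le e^{a_1\Delta t}$ and $\frac{1}{1 - a_0\Delta t} \le 1 + 2a_0\Delta t \le e^{2a_0\Delta t}$, the middle inequality being valid precisely because $a_0\Delta t \le 1/2$. Hence $\rho \le e^{(2a_0 + a_1)\Delta t}$, so $\rho^n \le e^{(2a_0 + a_1)n\Delta t}$, and for $1 \le i \le n$,
\[
 \rho^{\,n-i}\,\frac{1}{1 - a_0\Delta t} \le e^{a_1(n-i)\Delta t}\, e^{2a_0(n-i+1)\Delta t} \le e^{(2a_0 + a_1)n\Delta t},
\]
the last step using $n - i + 1 \le n$. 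Substituting these estimates together with $z^0 = x^0$ gives $z^n \le e^{(2a_0 + a_1)n\Delta t}\bigl(x^0 + \Delta t \sum_{i=1}^n b^i\bigr)$, and unfolding the definition of $z^n$ is exactly the claimed inequality.

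The argument is elementary throughout, so there is no genuine obstacle; the only place demanding a little attention is the bookkeeping with the extra factor $(1 - a_0\Delta t)^{-1}$ attached to the $b^i$-terms — one must check that the exponent $2a_0(n-i+1)\Delta t$ it produces still does not exceed $2a_0 n\Delta t$, which holds because $i \ge 1$. An alternative that sidesteps even this is to first drop $\Delta t\, y^n \ge 0$ to get the bound for $x^n$ alone, then recover the $\Delta t \sum y^i$ term by summing the rearranged recursion over $n$; but the route through $z^n$ above is the shortest.
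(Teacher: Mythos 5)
Your proof is correct and follows essentially the route the paper indicates: the paper's entire proof hint is the chain $\frac{1}{1-a_0\Delta t}\leq 1+2a_0\Delta t\leq \exp(2a_0\Delta t)$, and your argument is precisely the standard rearrangement-and-iteration that this hint presupposes, with the bookkeeping (including the exponent $2a_0(n-i+1)\Delta t\leq 2a_0 n\Delta t$ for $i\geq 1$) carried out correctly.
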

Lemma \ref{lem:discreteGronwall} is shown by using the inequalities
\begin{equation*}
	\frac{1}{1-a_0\Delta t} \leq 1+2a_0\Delta t \leq \exp(2a_0\Delta t).
\end{equation*} 
{\it Outline of the proof of Theorem \ref{theo:mainStability}}.  
We substitute $\phi_h^n$ into $\psi_h$ in \eqref{eq:scheme2}.   
We can apply Lemma \ref{lemm:bijective_1} with $w=u_h^n$ and $\psi=\phi_h^{n-1}$ by virtue of $\Delta t \matrixnorm{u_h} <1$. 
The rest of the proof is similar to \cite[Theorem 1]{RuiTabata2002}.  
We, therefore, omit it.  
\par
\renewcommand{\proofname}{Proof of Theorem \ref{theo:mainConvergence}}
\begin{proof}
We first show the estimate (\ref{eq:mainest1}).
Let 
\begin{equation}\label{eq:etilde}
	\eone \equiv \phi_h-\pro{\phi}, ~ \proe \equiv \phi-\pro{\phi},
\end{equation}
where $\pro{\phi}$ is the Poisson projection defined in (\ref{eq:poissonProDef}).
From (\ref{convdifeq}) and (\ref{eq:scheme2}) we have
\begin{equation}\label{eq:errorEquation}
	\left(\frac{\eone^n-\eone^{n-1}\circ X_{1h}^{n}}{\Delta t},\psi_h \right)
	+ \nu (\nabla \eone^n, \nabla \psi_h)
	= \sum_{i=1}^4 (R_i^n,\psi_h)
\end{equation}
for $\psi_h \in V_h$, 
where
\begin{equation}\label{eq:defR}
\begin{split}
	R_1^n &\equiv \henbi{\phi^n}{t}+u^n\cdot \nabla \phi^n 
		- \frac{\phi^n-\phi^{n-1}\circ X_1^{n}}{\Delta t},\\
	R_2^n &\equiv \frac{\phi^{n-1}\circ X_{1h}^{n}-\phi^{n-1}\circ X_{1}^{n}}{\Delta t},\\
	R_3^n &\equiv \frac{\proe^n-\proe^{n-1}}{\Delta t}, \quad
	R_4^n \equiv \frac{\proe^{n-1}-\proe^{n-1}\circ X_{1h}^{n}}{\Delta t}.
\end{split}
\end{equation}
Substituting $\eone^n$ into $\psi_h$, applying Lemma \ref{lemm:bijective_1} 
with $F=X_{1h}^{n}$ and $\psi=\eone^{n-1}$,
and evaluating the first term of the left-hand side as
\begin{align*}
	\left(\frac{\eone^n-\eone^{n-1}\circ X_{1h}^{n}}{\Delta t},e_h^n \right)
	&\geq \frac{1}{2\Delta t} (\mynorm{e_h^n}^2-\mynorm{e_h^{n-1}\circ X_{1h}^n}^2) \\
	&\geq \frac{1}{2\Delta t} (\mynorm{e_h^n}^2-(1+c_1\Delta t)^2 \mynorm{e_h^{n-1}}^2) \\
	&= \frac{1}{2\Delta t}(\mynorm{e_h^n}^2 - \mynorm{e_h^{n-1}}^2) - \frac{c_1}{2} (2+c_1\Delta t) \mynorm{e_h^{n-1}}^2,
\end{align*}
we have
\begin{equation}\label{eq:modifiedInterpole1}
\begin{split}
	&\frac{1}{2\Delta t}(\mynorm{\eone^n}^2-\mynorm{\eone^{n-1}}^2)
	+ \nu \mynorm{\nabla \eone^n}^2 \\
	\leq & 
	\cuone \mynorm{\eone^{n-1}}^2
	+\sum_{i=1}^4 \frac{1}{4\eps_i} \mynorm{R_i^n}^2
	+\left(\sum_{i=1}^4 \eps_i \right) \mynorm{\eone^n}^2,
\end{split}
\end{equation}
where $\set{\eps_i}_{i=1}^4$ are positive constants satisfying $\dtgiven \leq \frac{1}{4\eps_0}$, $\eps_0 \equiv \sum_{i=1}^4 \eps_i$.

We evaluate $R_i$, $i=1,\cdots,4$.
Setting 
\[  y(x,s)=x+(s-1)\Delta t ~ u^{n}(x), \quad t(s)=t^{n-1}+s \Delta t, 
\]
we have 
\[
	\frac{\phi^n-\phi^{n-1}\circ X_1^{n}}{\Delta t} = \frac{1}{\Delta t}\big[ \phi(y(\cdot,s),t(s)) \big]_{s=0}^1, 
\]
which implies 
\begin{align*}
	R_1^n &= \henbi{\phi^n}{t} + u^n\cdot \nabla \phi^n
		- \int_{0}^{1} \left\{ u^{n}(\cdot)\cdot \nabla \phi + \frac{\partial\phi}{\partial t} \right\} (y(\cdot,s),t(s))    ds 
\\ &
=
 \Delta t \int_{0}^{1} ds \int_{s}^{1} \left\{ \left( u^{n}(\cdot)\cdot \nabla  + \frac{\partial}{\partial t} \right)^2\phi  \right\} (y(\cdot,s_1),t(s_1)) d s_1 
\\ &
= \Delta t  \int_{0}^{1} s_1 \left\{ \left( u^{n}(\cdot)\cdot \nabla  + \frac{\partial}{\partial t} \right)^2\phi  \right\}  (y(\cdot,s_1),t(s_1))    d s_1 . 
\end{align*}
Hence, we have 
\begin{align}
\| R_1^n \| 
& \le \Delta t  \int_{0}^{1} s_1 
  \left\| 
       \left\{ \left( u^{n}(\cdot)\cdot \nabla  + \frac{\partial}{\partial t} \right)^2\phi  \right\}  (y(\cdot,s_1),t(s_1)) 
  \right\|   d s_1  \nonumber
\\
& \le c_0 \sqrt{\Delta t}
			\mynorm{\phi}_{Z^2(t^{n-1},t^n)}, 
 \label{est:r1}
\end{align}
where we have used the transformation of independent variables from $x$ to $y$ and $s_1$ to $t$, and 
the estimate $| \partial x / \partial y | \le 2$ by virtue of Lemma \ref{jacobiest}.  
\par
From $\Delta t \matrixnorm{u}, \Delta t \matrixnorm{u_h} \leq \done$,
and Lemmas \ref{lemm:twofunc} and \ref{lemm:interpolation}
it holds that
\begin{equation} 
\label{eq:estimateR2}
  \mynorm{R_2^n} \leq \sqrt{2}\normi{\nabla \phi^{n-1}} \normsobi{\Pi_h^{(1)} u^{n} - u^{n}}{0}{\infty} \leq \cutwo h^2\mynorm{\nabla \phi^{n-1}}.
\end{equation}
$R_3^n$ is evaluated as 
\begin{align}
\| R_3^n \| 
& = 
  \left\| 
	\int_{0}^{1} 
            \frac{\partial \eta}{\partial t}    (\cdot,t(s))    d s 
 \right\|
  \le  \frac{\cpro h^{k}}{\sqrt{\Delta t}} \mynorm{\henbi{\phi}{t}}_{L^2(t^{n-1},t^n;H^{k+1})}, 
  \label{est:r3}
\end{align}
where we have used (\ref{h1estimate}). 

From $\Delta t \matrixnorm{u_h} \leq \done$
and Lemma \ref{lemm:twofunc} 
it holds that 
\begin{equation}\label{estimateR4}
  \mynorm{R_4^n} \leq \sqrt{2}\normi{\nabla \proe^{n-1}} \normsobi{\Pi_h^{(1)} u^{n}}{0}{\infty} \leq \cuzero h^k \normsob{\phi^{n-1}}{k+1}{2}.
\end{equation}
Combining 
(\ref{eq:modifiedInterpole1})--(\ref{estimateR4}),
we have
\begin{equation*} 
\begin{split}
	&\frac{1}{2\Delta t} \left( \mynorm{\eone^n}^2-\mynorm{\eone^{n-1}}^2 \right)
	+ \nu \mynorm{\nabla \eone^n}^2 
	\leq  \eps_0 \mynorm{\eone^n}^2 + \cuone \mynorm{\eone^{n-1}}^2  \\
	&+\cutwo \biggl\{ 
		\Delta t \mynorm{\phi}_{Z^2(t^{n-1},t^n)}^2
		+ h^4 \mynorm{\nabla \phi^{n-1}}^2 \\
		& + \frac{h^{2k}}{\Delta t} \mynorm{\henbi{\phi}{t}}_{L^2(t^{n-1},t^n;H^{k+1})}^2 
		+ h^{2k} \normsob{\phi^{n-1}}{k+1}{2}^2
	\biggr\}.
\end{split}
\end{equation*}
From Lemma \ref{lem:discreteGronwall} we obtain for $n=1,\dots ,N_T$
\begin{equation*}
\begin{split}
	&\mynorm{e_h^n}^2 + 2\nu \Delta t \sum_{j=1}^{N_T} \mynorm{\nabla e_h^j}^2 
	\leq \cutwo \biggl( 
		\mynorm{e_h^0}^2 
		+ \Delta t^2 \mynorm{\phi}_{Z^2}^2 \\
		& + h^{2k} \mynorm{\henbi{\phi}{t}}_{L^2(H^{k+1})}^2 
		+ h^{2k} \Delta t \sum_{j=0}^{N_T-1} \normsob{\phi^j}{k+1}{2}^2	
		+ h^4 \Delta t \sum_{j=0}^{N_T-1}\mynorm{\nabla \phi^j}^2
	\biggr),
\end{split}
\end{equation*}
which implies (\ref{eq:mainest1}) by virtue of 
$e_h^0 = 0$
and the triangle inequalities, 
\begin{align}
 \mynorm{\phi-\phi_h}_{\ell^\infty(L^2)} & \le \mynorm{\eone}_{\ell^\infty(L^2)}+\mynorm{\proe}_{\ell^\infty(L^2)} \nonumber \\
  &\le \mynorm{\eone}_{\ell^\infty(L^2)}+\cpro h^k \mynorm{\phi}_{\ell^\infty(H^{k+1})}, \label{eq:triangle}
 \\
 \mynorm{ \nabla(\phi-\phi_h) }_{\ell^2(L^2)} & \le \mynorm{\nabla\eone}_{\ell^2(L^2)}+\mynorm{\nabla\proe}_{\ell^2(L^2)} \nonumber \\
  &\le \mynorm{\nabla\eone}_{\ell^2(L^2)}+\cpro h^k \mynorm{\phi}_{\ell^2(H^{k+1})}.   
  \nonumber
 \end{align}

We show the estimate (\ref{eq:mainest2}).
The equation (\ref{eq:errorEquation}) can be rewritten as 
\begin{equation*}
	\frac{1}{\Delta t}(\eone^n-\eone^{n-1},\psi_h)+\nu(\nabla \eone^n,\nabla \psi_h)= \sum_{i=1}^5 (R_i^n,\psi_h), 
\end{equation*}
where 
\begin{equation*}
	R_5^n \equiv \frac{1}{\Delta t}(\eone^{n-1}\circ X_{1h}^{n}-\eone^{n-1}).
\end{equation*}
From Lemma \ref{lemm:twofunc}  
it holds that
\begin{equation*}
	\mynorm{R_5^n} \leq \sqrt{2} \normi{\nabla \eone^{n-1}}  \normsobi{\Pi_h^{(1)}u^{n}}{0}{\infty}
	\leq \cuzero \normi{\nabla \eone^{n-1}}.
\end{equation*}
Substituting $\bwd \eone^n \equiv \frac{1}{\Delta t}(\eone^n-\eone^{n-1})$ into $\psi_h$, 
and using (\ref{est:r1})--(\ref{estimateR4}) for $R_1,\dots,R_4$,
we have
\begin{equation*}
\begin{split}
	&\mynorm{\bwd \eone^n}^2 
	+ \frac{1}{\Delta t}\left( 
	\frac{\nu}{2} \mynorm{\nabla\eone^n}^2
	- \frac{\nu}{2} \mynorm{\nabla\eone^{n-1}}^2
	\right)
	+ \frac{\nu}{2 \Delta t}\mynorm{\nabla(\eone^n-\eone^{n-1})}^2 \\
	\leq & \cutwo \biggl\{ 
	\Delta t \mynorm{\phi}_{Z^2(t^{n-1},t^n)}^2
	+ h^4 \mynorm{\nabla \phi^{n-1}}^2 
	 + \frac{h^{2k}}{\Delta t} \mynorm{\henbi{\phi}{t}}_{L^2(t^{n-1},t^n;H^{k+1})}^2 \\
	& 
	+ h^{2k} \normsob{\phi^{n-1}}{k+1}{2}^2 
	\biggr\}
	+\frac{\cuzero}{\nu} \left( \frac{\nu}{2} \mynorm{\nabla \eone^{n-1}}^2 \right) 
	+\frac{1}{2} \mynorm{\bwd \eone^{n}}^2.
\end{split}
\end{equation*}
From 
Lemma \ref{lem:discreteGronwall} we have for $n=1,\dots ,N_T$
\begin{equation*}
\begin{split}
	& \frac{\Delta t}{2} \sum_{j=1}^{N_T} \mynorm{\bwd \eone^n}^2 +
	\frac{\nu}{2}\mynorm{\nabla \eone^n}^2
	\leq \cutwo \exp\left(\frac{\cuzero T}{\nu}\right) \biggl( 
	\mynorm{\nabla \eone^0}^2 
	+ \Delta t^2 \mynorm{\phi}_{Z^2}^2 
	\\
	& 
	+ h^{2k} 
	\mynorm{\henbi{\phi}{t}}_{L^2(H^{k+1})}^2 
	+ 
	h^{2k} \Delta t \sum_{j=0}^{N_T-1} \normsob{\phi^j}{k+1}{2}^2
	+ h^4 \Delta t \sum_{j=0}^{N_T-1}\mynorm{\nabla \phi^j}^2
		\biggr),
\end{split}
\end{equation*}
which implies (\ref{eq:mainest2}) by virtue of 
$e_h^0=0$,
the triangle inequality,
\begin{equation*}
\begin{split}
	\mynorm{\nabla (\phi-\phi_h)}_{\ell^\infty(L^2)}
	&\leq \mynorm{\nabla e_h}_{\ell^\infty(L^2)}+\mynorm{\nabla \eta}_{\ell^\infty(L^2)} \\
	&\leq  \mynorm{\nabla e_h}_{\ell^\infty(L^2)} + \cpro h^k\mynorm{\phi}_{\ell^\infty(H^{k+1})}
\end{split}
\end{equation*}
and the Poincar\'e inequality,  
\begin{align}
 \mynorm{v}_{1}  \leq c \mynorm{\nabla v}, \quad \forall v \in H_0^1(\Omega).
\label{poincare}\end{align}

Now we show the estimate (\ref{eq:mainest3}). 
We return to the error equation (\ref{eq:errorEquation}).
Using \eqref{h2regularity} in place of (\ref{h1estimate}) in the estimate of $R_3^n$, we can evaluate (\ref{est:r3}) as 
\[
   \mynorm{ R_3^n}    
   \leq \frac{\cpro h^{k+1}}{\sqrt{\Delta t}} \mynorm{\henbi{\phi}{t}}_{L^2(t^{n-1},t^n;H^{k+1})}.
\]
From Lemma \ref{lemm:DouglasRussell}
we have
\begin{equation*}
	\normi{ R_4^n}_{H^{-1}(\Omega)} 
	\leq \cuone \mynorm{\proe^{n-1}}
	\leq \cuone h^{k+1} \normsob{\phi^{n-1}}{k+1}{2}.
\end{equation*}
Hence, it holds that 
\begin{equation*}
\begin{split}
	( R_4^n,e_h^n) &\leq \mynorm{ R_4^n}_{H^{-1}(\Omega)} \normsob{e_h^n}{1}{2}
	\leq \frac{\cuone}{\nu}h^{2(k+1)}\normsob{\phi^{n-1}}{k+1}{2}^2 + \frac{\nu}{2}\mynorm{\nabla e_h^n}^2,
\end{split}
\end{equation*}
where we have used the Poincar\'e inequality \eqref{poincare}.  
Using this inequality instead of $\frac{1}{4\eps_4}\mynorm{R_4^n}^2+ \eps_4\mynorm{e_h^n}^2$ in (\ref{eq:modifiedInterpole1})
and replacing the last term of (\ref{eq:triangle}) by $\cpro h^{k+1} \mynorm{\phi}_{\ell^\infty(H^{k+1})}$,
we obtain (\ref{eq:mainest3}).

\end{proof}
%
\section{Numerical results}\label{sec:numer}
We show numerical results in $d=2$.
We compare the conventional scheme (\jurai) with the present one (\zettai).
We use FreeFem++ \cite{FreeFemCite} for the triangulation of the domain.
Both $\pk1$- and $\pk2$-elements are used.
For \jurai \ we use the seven points quadrature formula of degree five \cite{HMS}. 
A relative error $E_X$ is defined by
	\begin{equation*}
		E_X \equiv \frac{\|\Pi_h^{(k)}\phi - \phi_h\|_{\ell^\infty(X)}} 		{\|\Pi_h^{(k)} \phi\|_{\ell^\infty(X)}},
	\end{equation*}
where
$\Pi_h^{(k)}$ is the Lagrange interpolation operator to the $\pk k$-finite element space and 
$X=L^2(\Omega)$ or $H_0^1(\Omega)$.
\begin{exam}[The rotating Gaussian hill \cite{RuiTabata2002}]\label{exam:rotating}
In (\ref{convdifeq}), $\Omega$ is an unit disk, and we set $T=2\pi, \nu=10^{-5}$, 
\begin{equation*}
	u(x,t) \equiv (-x_2,x_1), ~ f \equiv 0, ~ \phi^0 \equiv \phi_e(\cdot,0), 
\end{equation*}
where
\begin{equation*}
\begin{split}
 	&\phi_e(x,t) \equiv
 	\frac{\sigma}{\sigma + 4\nu t} \exp \left\{ -\frac{(\bar x_1(t) - x_{1,c})^2 +(\bar x_2(t) - x_{2,c})^2}{\sigma + 4\nu t} \right\}, \\
 	&(\bar x_1,\bar x_2)(t) \equiv (x_1 \cos t + x_2 \sin t,-x_1 \sin t + x_2 \cos t),\\
	&(x_{1,c}, x_{2,c}) \equiv (0.25,0), ~ \sigma \equiv 0.01. 
\end{split}
\end{equation*}
\end{exam}
\onefig{
	\includegraphics[width=50mm]{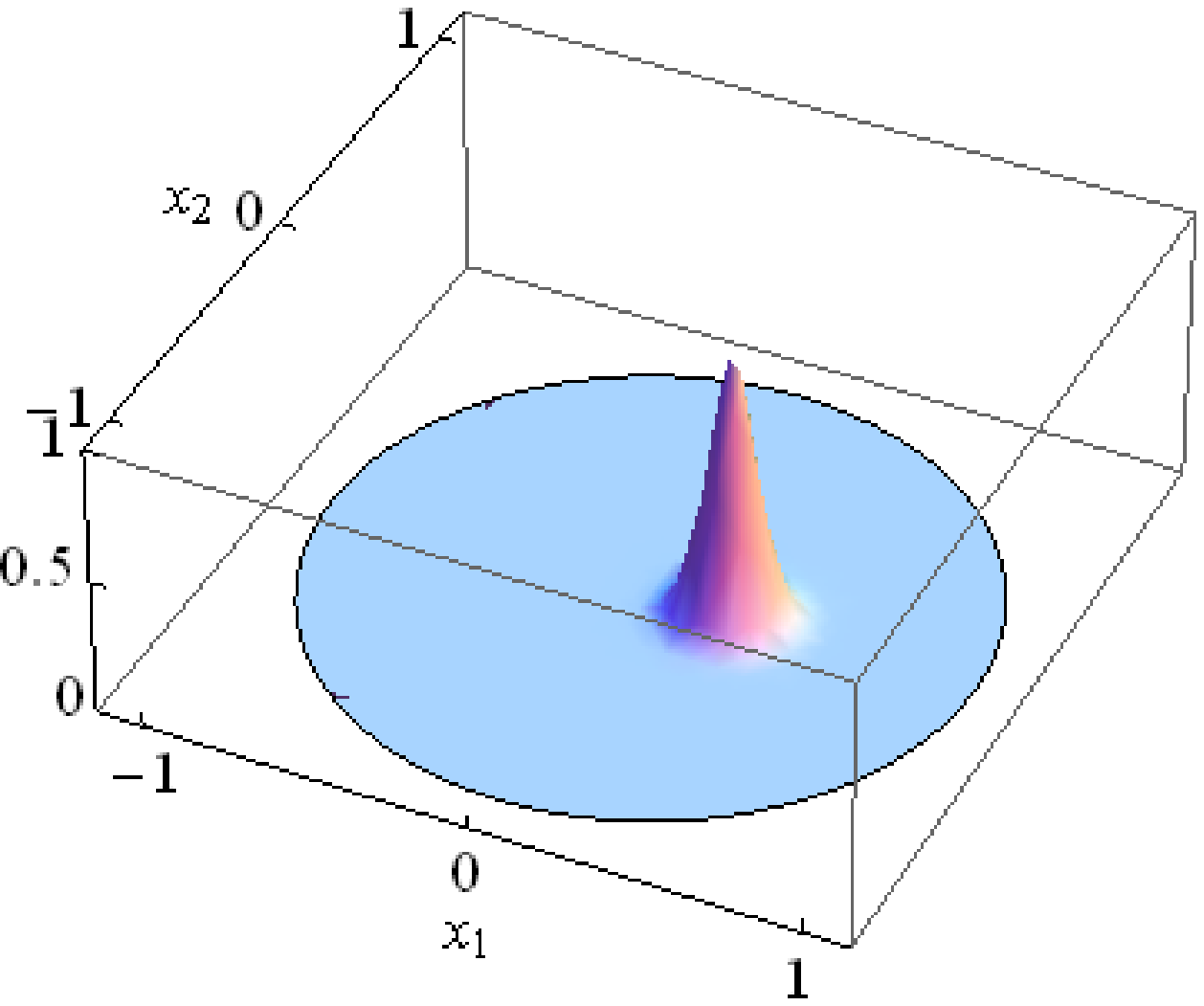}
	\quad
	\includegraphics[width=40mm]{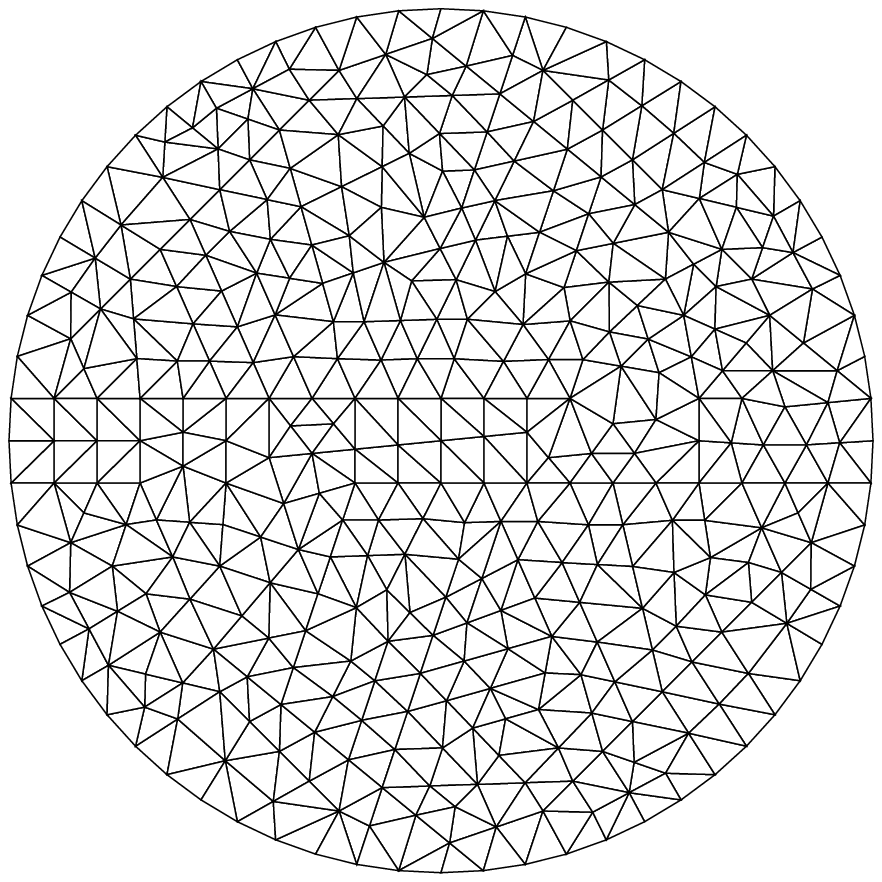} 
	\caption{
		The function $\phi_e (\cdot,0)$ (left) and the triangulation of $\bar \Omega$ for $N=64$ (right) in Example \ref{exam:rotating}
	}
	\label{fig:rhexactAndcirc64}
}
In this problem 
the identity $\Pi_h^{(1)}u=u$ holds.
This problem does not satisfy our setting because
$\Omega$ is not a polygon and $u\not=0$ on $\partial \Omega$.
The function $\phi_e$ in Fig. \ref{fig:rhexactAndcirc64} (left) satisfies (\ref{convdifeqa}) and (\ref{convdifeqc}) but does not satisfy the boundary condition (\ref{convdifeqb}). 
However, 
we may apply the schemes and treat $\phi_e$ as the solution since the value of $\phi_e$ on $\partial \Omega$ is almost equal to zero, less than $10^{-15}$, 
and we may neglect the effect of the boundary value and 
the term $\int_K(\phi_h^{n-1}\circ X_1^{n}) \psi_h dx$ and 
$\int_K(\phi_h^{n-1}\circ X_{1h}^{n})\psi_h dx$ 
on the element $K$ touching the boundary.

Let $N$ be the division number of the circle. 
We set $h\equiv 2\pi/N, N=32, 64, 128$ and $256$.
Figure \ref{fig:rhexactAndcirc64} (right) shows the triangulation of $\bar \Omega$ for $N=64$.
The time increment $\Delta t$ is set to be
$c_1 h $ and
$c_2 h^2 $ for $\pk1$-element ($c_1 = \frac{4}{5\pi}\nequal 0.255,~c_2 = \frac{64}{5\pi^2}\nequal 1.30$), 
$c_3 h^2 $ and
$c_4 h^3 $ for $\pk2$-element ($c_3 = \frac{128}{5\pi^2}\nequal 2.59,~c_4 = \frac{2048}{5\pi^3}\nequal 13.21$)
so that we can observe the convergence behavior of $O(h^k)$ for $E_{H_0^1}$, and $O(h^k)$ and $O(h^{k+1})$ for $E_{L^2}$
when $\pk k$-element is employed.

In the following figures we use the symbols shown in Table 1.  
Figure \ref{fig:rotp1p2e} shows the log-log graphs of $E_{L^2}$ and $E_{H_0^1}$ versus $h$.  
The left graph shows the results of $\pk1$-element and Tables \ref{table:rotp1} shows the values of them.
The convergence order of $E_{L^2}$ with $\Delta t=O(h)$ is less than 1 in \jurai ~(\markone) and more than 1 in \zettai~(\marktwo). 
The orders of $E_{L^2}$ with $\Delta t=O(h^2)$ are almost 2 for small $h$ in both schemes (\markthree, \markfour).
The convergence of $E_{H_0^1}$ is not observed in \jurai~(\markfive) while the order is almost 1 in \zettai~(\marksix).
The right graph of Fig. \ref{fig:rotp1p2e} shows the results of $\pk2$-element and Tables \ref{table:rotp2} shows the values of them.
The errors $E_{L^2}$ with $\Delta t=O(h^2)$
are too large at $N=128$ and $256$ 
to be plotted in the graph in \jurai ~(\markone)
while 
the convergence order is almost 2 in \zettai~(\marktwo).
The error $E_{L^2}$ with $\Delta t=O(h^3)$
is large at $N=128$, but it becomes small again at $N=256$ in \jurai~(\markthree). 
We will discuss the reason why 
such a behavior occurs 
in a forthcoming paper.
The order 
is 
greater than 2.5 
in \zettai~(\markfour).
The errors $E_{H_0^1}$ are too large at $N=128$ and $256$
to be plotted in the graph in \jurai~(\markfive) 
while we can observe the convergence of $E_{H_0^1}$ but the order is less than 2 in \zettai~(\marksix).
The errors of \zettai ~are smaller than those of \jurai ~in both cases of $\pk1$- and $\pk2$-element.
\begin{mytable}
\caption{Symbols used in Figs. \ref{fig:rotp1p2e}, \ref{fig:sinnu-2} and \ref{fig:sinnu-5} and Tables 
\ref{table:rotp1}--\ref{table:sinnu-5_p2}
}
\centering
\begin{tabular}{cccc}
	\hline
	$X$ & $\ell^\infty(L^2)$ & $\ell^\infty(L^2)$ & $\ell^\infty(H^1_0)$ \rule{0cm}{\columnheight}\\
	$\Delta t$ & $O(h^k)$ & $O(h^{k+1})$ & $O(h^k)$  \\
	\hline
	\hline
	\jurai 	& \markone & \markthree & \markfive  \rule{0cm}{\columnheight}\\
	\zettai & \marktwo & \markfour & \marksix  \rule{0cm}{\columnheight}\\
	\hline
\end{tabular}
\label{table:symbol}
\end{mytable}
%
\onefig{
  	\begin{overpic}[width=\figwidthbasic]
 	{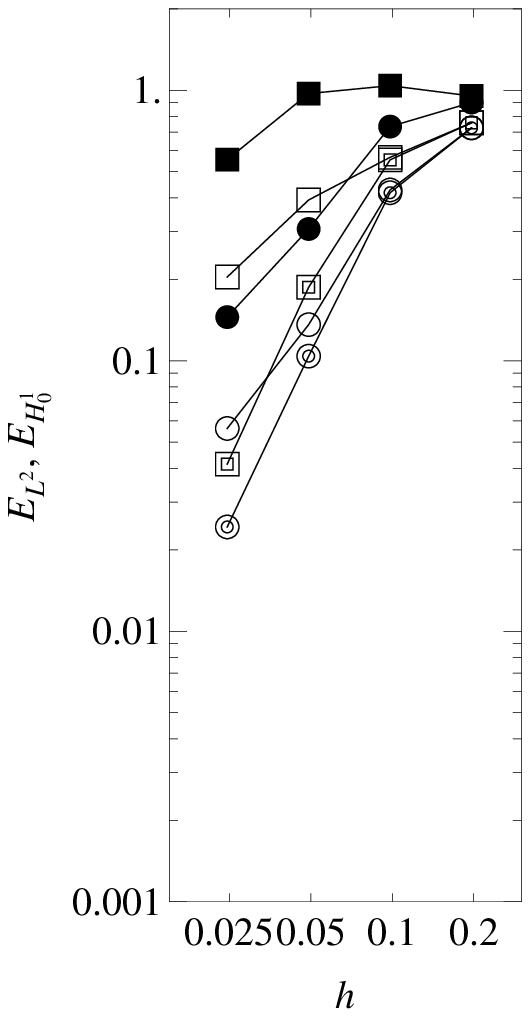}
  	\put(30,20){\includegraphics[width=10mm]{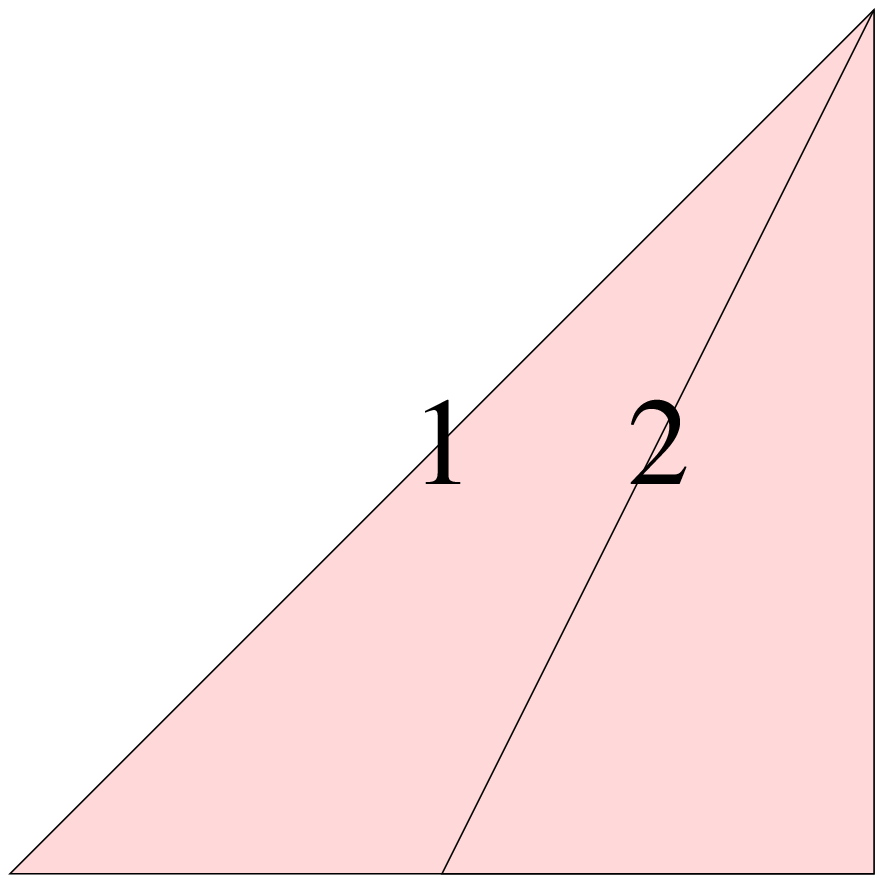} 
  	}
  	\end{overpic}
  	\begin{overpic}[width=\figwidthbasic]
 	{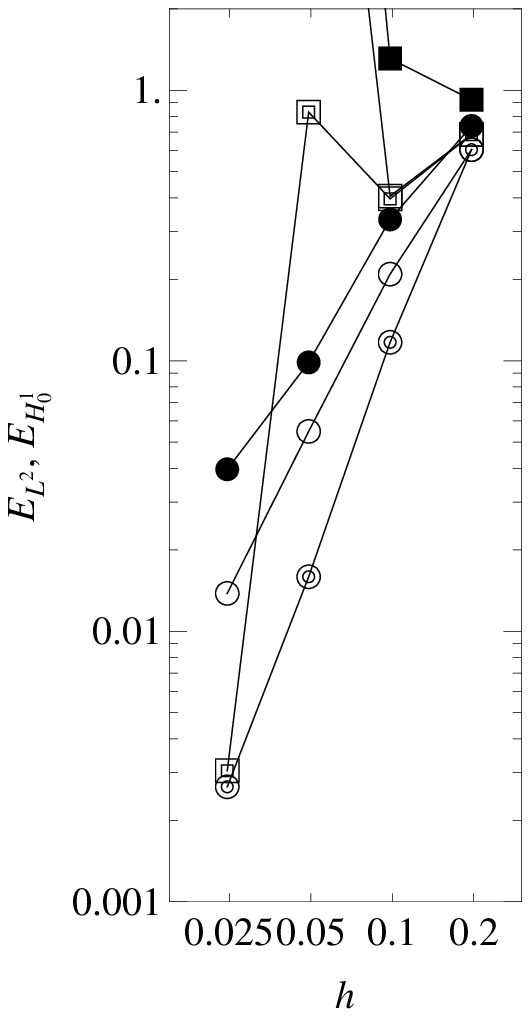}
  	\put(30,20){\includegraphics[width=8mm]{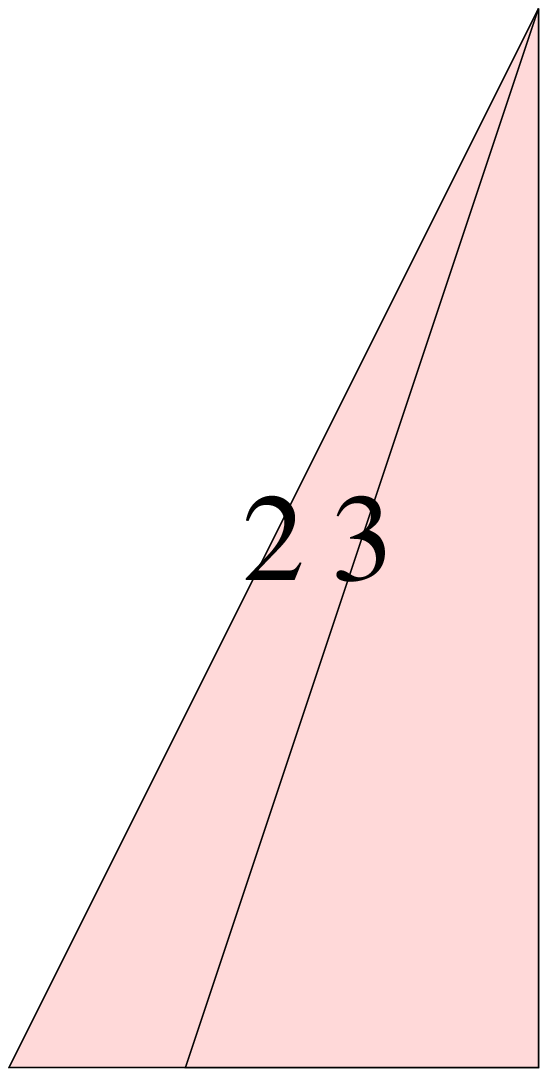}}
  	\end{overpic}
	\caption{Graphs of $E_{L^2}$ and $E_{H_0^1}$ versus $h$ in Example \ref{exam:rotating} by 
	$\pk k$-element. $k=1$ (left) and $k=2$ (right)
	}
	\label{fig:rotp1p2e}
}
\begin{mytable}
\caption{The values of errors and orders of the graph in Fig. \ref{fig:rotp1p2e} by $\pk 1$-element}
\label{table:rotp1}
\begin{tabular}{rlr@{\hspace{8mm}}lr@{\hspace{8mm}}lr}
\hline
$N$ & \markone & order & \markthree & order & \markfive & order \rule{0cm}{\columnheight}\\
\hline
\hline
32 & 7.58E-01 &  & 7.58E-01 &  & 9.52E-01 & \\
64 & 5.65E-01 & 0.42  & 5.53E-01 & 0.45  & 1.04E+00 & -0.13 \\
128 & 3.93E-01 & 0.52  & 1.87E-01 & 1.56  & 9.72E-01 & 0.10 \\
256 & 2.04E-01 & 0.95  & 4.15E-02 & 2.17  & 5.54E-01 & 0.81 \\
\hline
$N$ & \marktwo & order & \markfour & order & \marksix & order \rule{0cm}{\columnheight}\\
\hline
\hline
32 & 7.26E-01 &  & 7.26E-01 &  & 9.01E-01 & \\
64 & 4.28E-01 & 0.76  & 4.18E-01 & 0.80  & 7.34E-01 & 0.30 \\
128 & 1.36E-01 & 1.65  & 1.04E-01 & 2.01  & 3.07E-01 & 1.26 \\
256 & 5.62E-02 & 1.27  & 2.43E-02 & 2.10  & 1.45E-01 & 1.08 \\
\hline
\end{tabular}
\end{mytable}
\begin{mytable}
\caption{The values of errors and orders of the graph in Fig. \ref{fig:rotp1p2e} by $\pk 2$-element}
\label{table:rotp2}
\begin{tabular}{rlr@{\hspace{8mm}}lr@{\hspace{8mm}}lr}
\hline
$N$ & \markone & order & \markthree & order & \markfive & order \rule{0cm}{\columnheight}\\
\hline
\hline
32 & 6.86E-01 &  & 6.86E-01 &  & 9.22E-01 & \\
64 & 4.06E-01 & 0.76  & 3.97E-01 & 0.79  & 1.31E+00 & -0.51 \\
128 & 1.67E+02 & -8.68  & 8.30E-01 & -1.06  & 1.72E+03 & -10.36 \\
256 & 1.42E+27 & -82.81  & 3.05E-03 & 8.09  & 3.10E+28 & -83.90 \\
\hline
$N$ & \marktwo & order & \markfour & order & \marksix & order \rule{0cm}{\columnheight}\\
\hline
\hline
32 & 6.03E-01 &  & 6.03E-01 &  & 7.38E-01 & \\
64 & 2.09E-01 & 1.53  & 1.17E-01 & 2.37  & 3.33E-01 & 1.15 \\
128 & 5.48E-02 & 1.93  & 1.59E-02 & 2.88  & 9.86E-02 & 1.76 \\
256 & 1.38E-02 & 1.99  & 2.66E-03 & 2.58  & 3.97E-02 & 1.31 \\
\hline
\end{tabular}
\end{mytable}

Figure \ref{fig:profiles} shows the solutions $\phi_h^n$ for
$h=2\pi/64 \nequal 0.0982, \Delta t = 0.0065$, $n\Delta t \nequal 2\pi$.
In the case of $\pk1$-element, the solution of \jurai~is oscillatory
while that of \zettai ~is much better 
though a small ruggedness is observed. 
In the case of $\pk2$-element, the solution of \jurai ~is quite oscillatory 
while that of \zettai ~is stable.
\onefig{
\includegraphics[width=35mm]{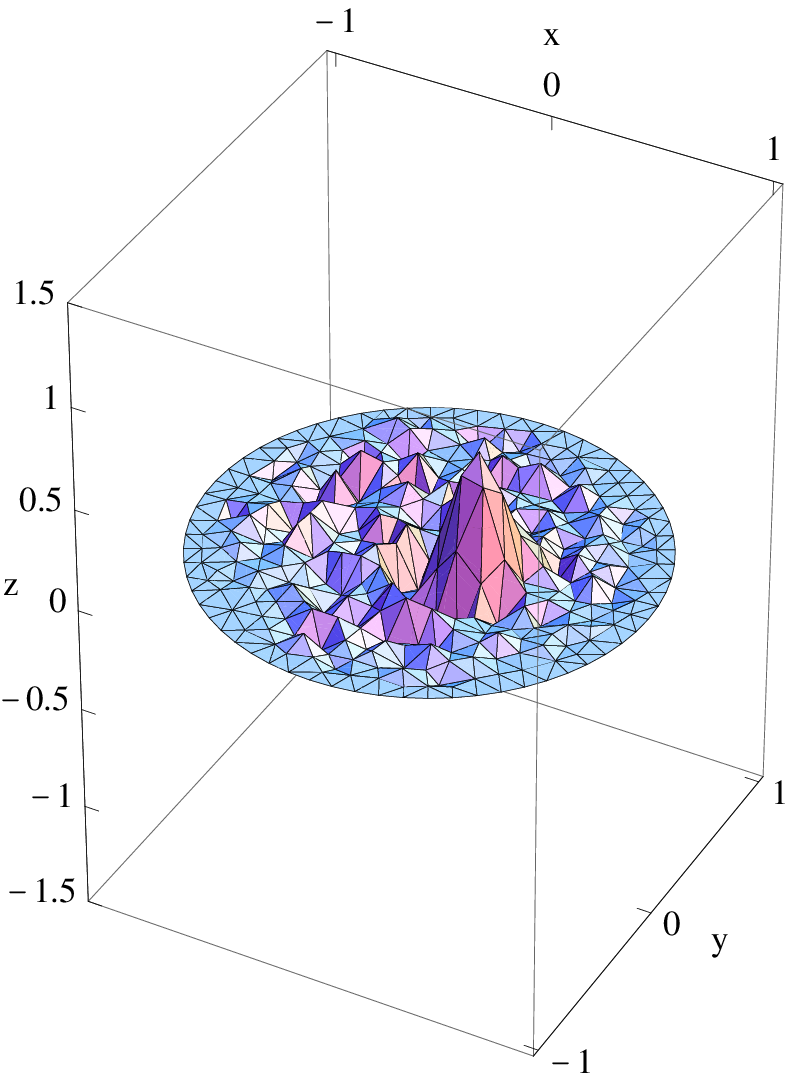}
\includegraphics[width=35mm]{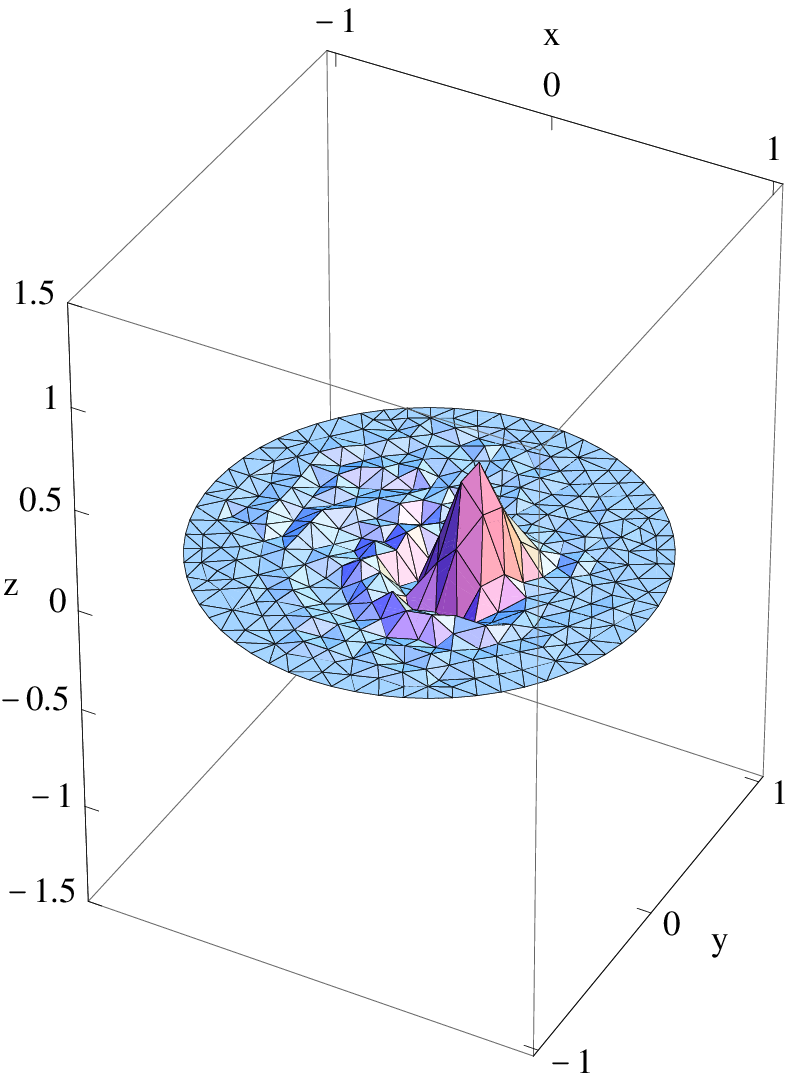}
\\
\includegraphics[width=35mm]{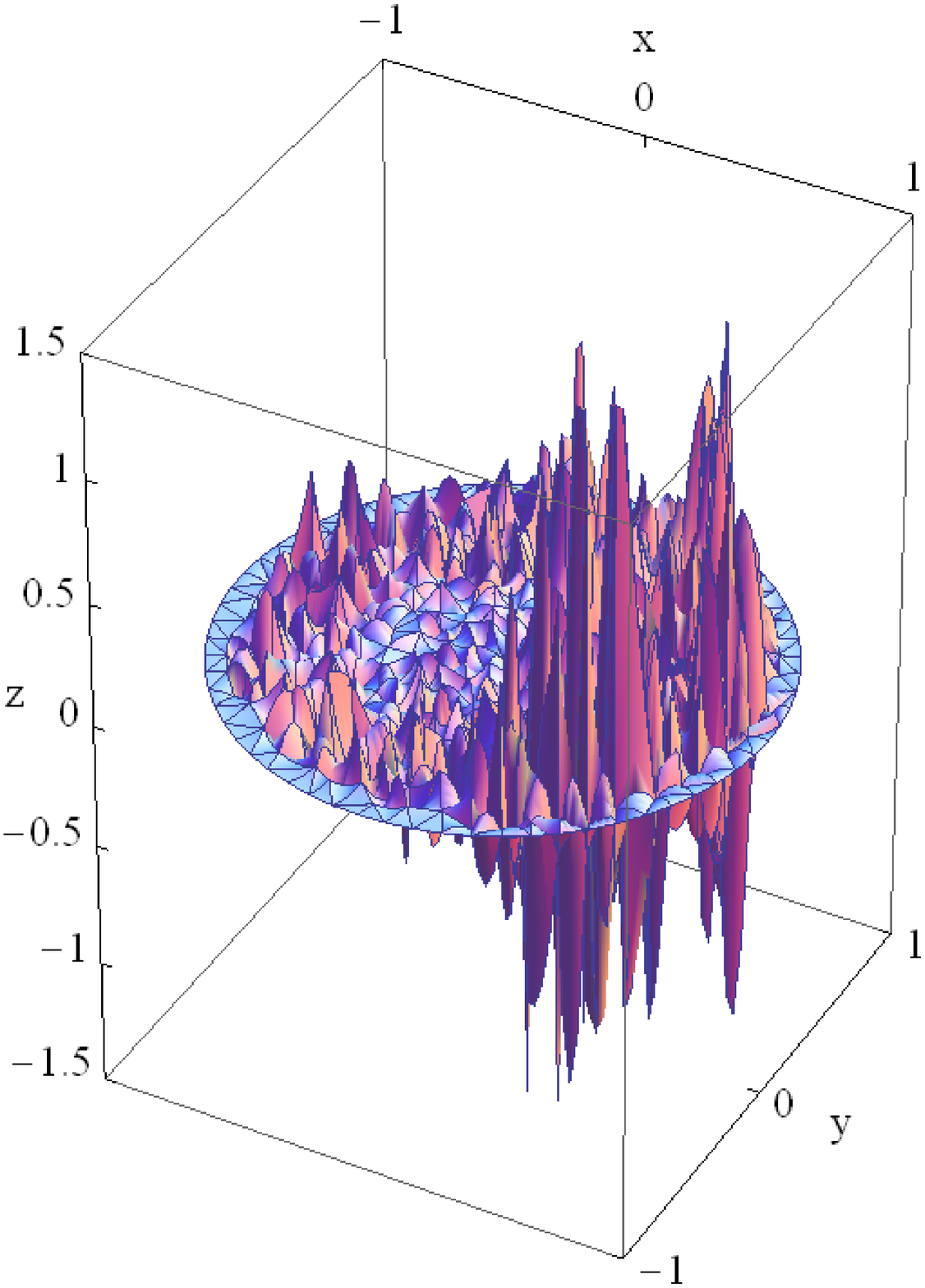}
\includegraphics[width=35mm]{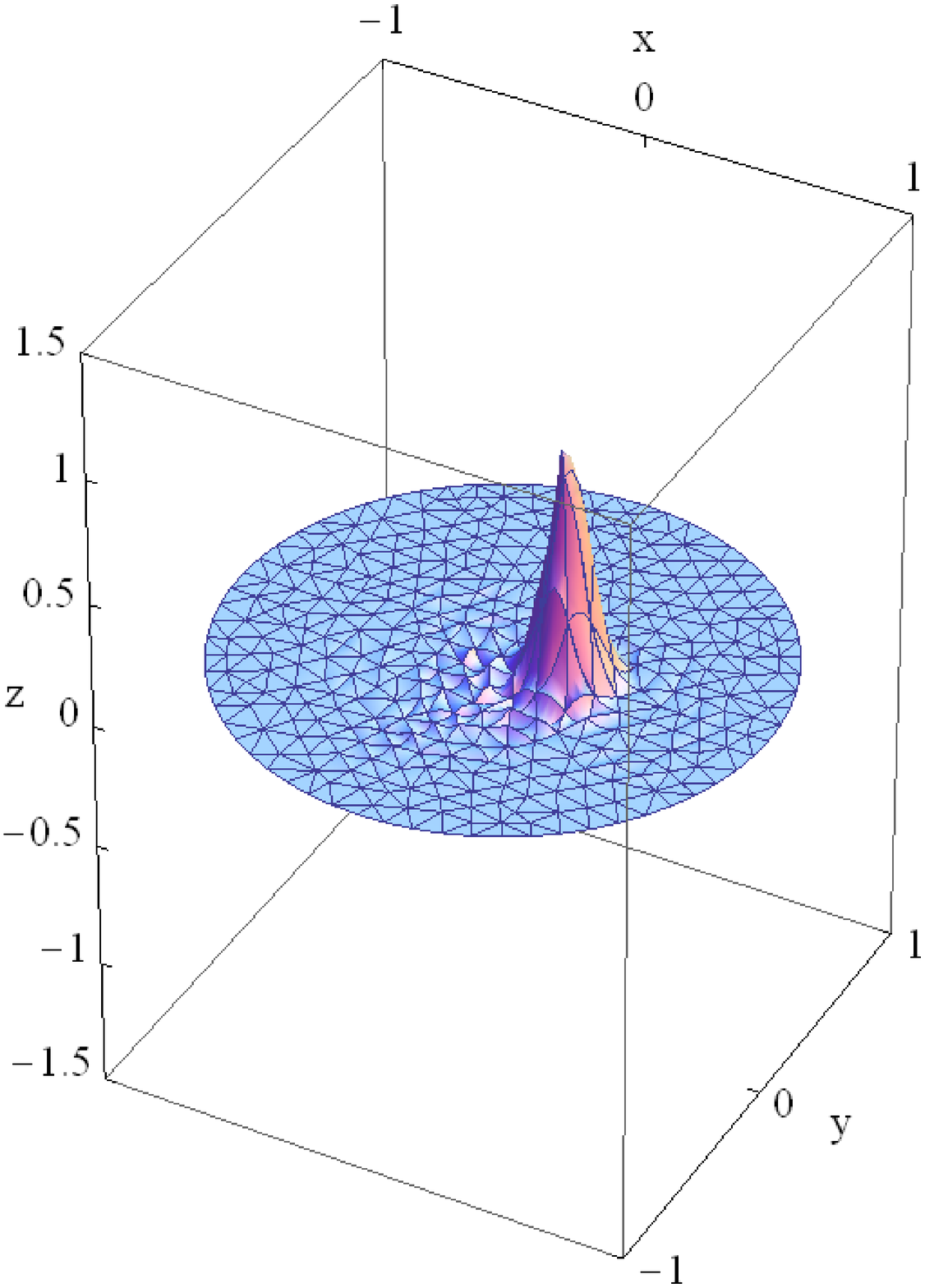}
\caption{
Solutions $\phi_h^n ~ (n\Delta t \nequal 2\pi)$ in Example \ref{exam:rotating} by 
\jurai (top left) and \zettai (top right) for $\pk1$-element, and by \jurai (bottom left) and \zettai (bottom right) for $\pk2$-element
}
\label{fig:profiles}
}
\begin{exam}%
\label{exam:sinsin}
In (\ref{convdifeq}), $\Omega$ is the square $(0,1)\times (0,1)$, and we set $T=1, \nu=10^{-2}$ and $10^{-5}$,
\begin{equation*}
\begin{split}
	&u(x,t) \equiv (\sin \pi x_1 \sin \pi x_2, \sin \pi x_1 \sin \pi x_2), ~
	f \equiv \henbi{\phi_e}{t}+u\cdot \nabla \phi_e - \nu \Delta  \phi_e,\\ 
	&\phi^0 \equiv \phi_e(\cdot,0), 
\end{split}
\end{equation*}
where
\begin{equation*}
	\phi_e(x,t) \equiv
		\cos(2\pi t) \sin^2(\pi x_1) \sin(2\pi x_2). 
\end{equation*}
\end{exam}
\onefig{
	\includegraphics[width=40mm]{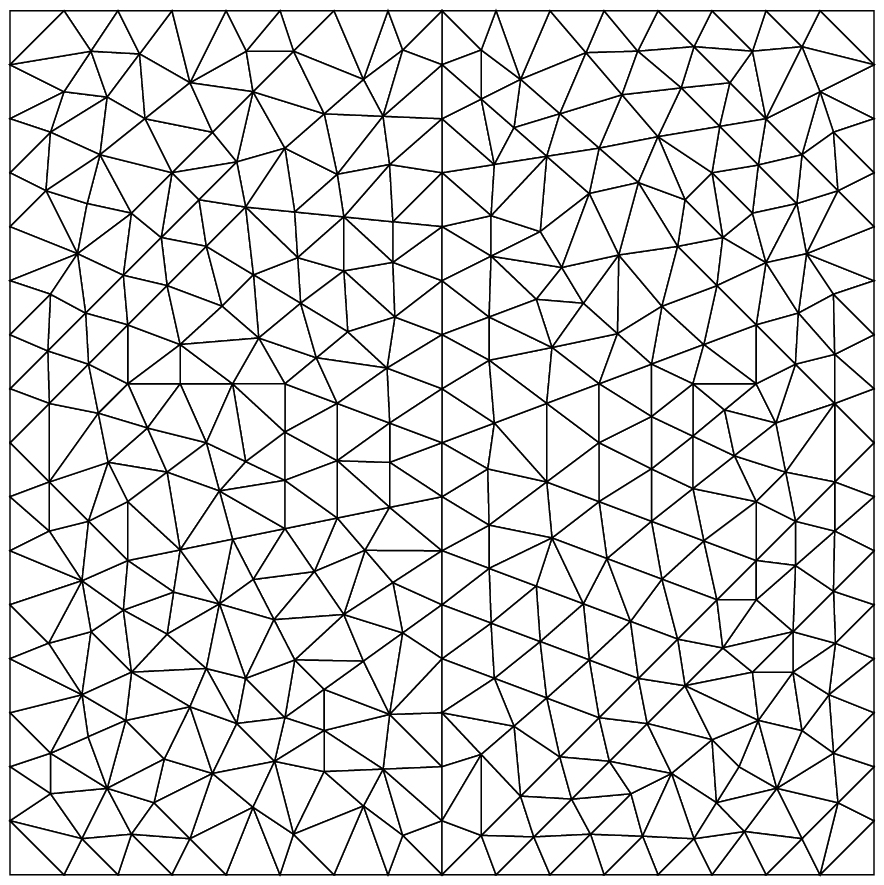} 
	\caption{The triangulation of $\bar \Omega$ for $N=16$ in Example \ref{exam:sinsin}}
	\label{fig:irsqori16}
}
In this problem, 
$\Pi_h^{(1)} u\not = u$.
Let $N$ be the division number of each side of $\bar \Omega$.
We set $h\equiv 1/N, N=8, 16, 32$ and $64$.
Figure \ref{fig:irsqori16} shows the triangulation of $\bar \Omega$ for $N=16$.
The time increment $\Delta t$ is set to be
$c_1 h $ and $c_2 h^2$ for $\pk1$-element $(c_1=0.125,c_2=1)$, 
$c_3 h^2$ and $c_4 h^3$ for $\pk2$-element $(c_3=1,c_4=5.12)$
so that 
we can observe the convergence behavior 
of $O(h^k)$ for $E_{H_0^1}$, and $O(h^k)$ and $O(h^{k+1})$ for $E_{L^2}$ when $\pk k$-element is employed.
\onefig{
  	\begin{overpic}[width=\figwidthbasic]{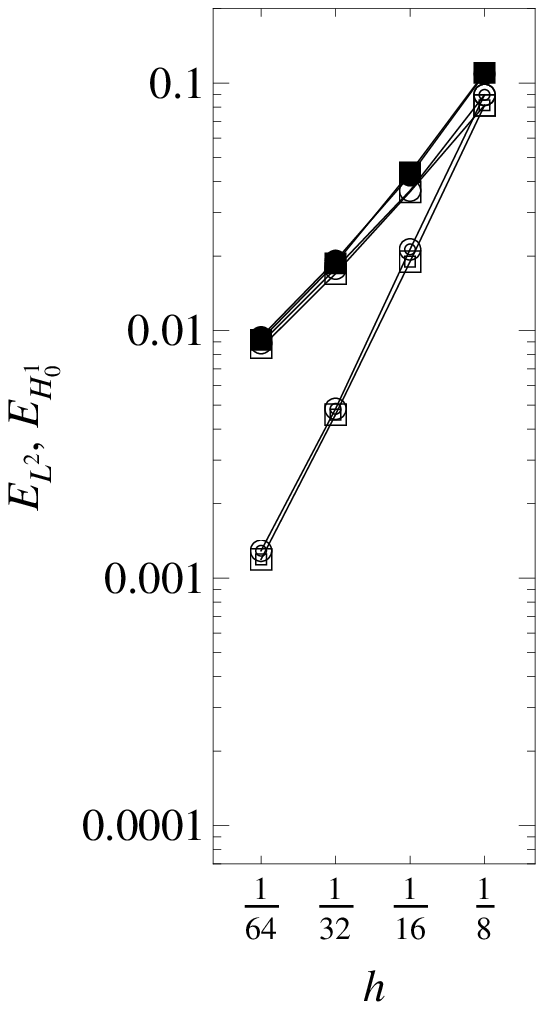}
  	\put(30,20){\includegraphics[width=10mm]{tri12.eps}}
  	\end{overpic}
	\begin{overpic}[width=\figwidthbasic]{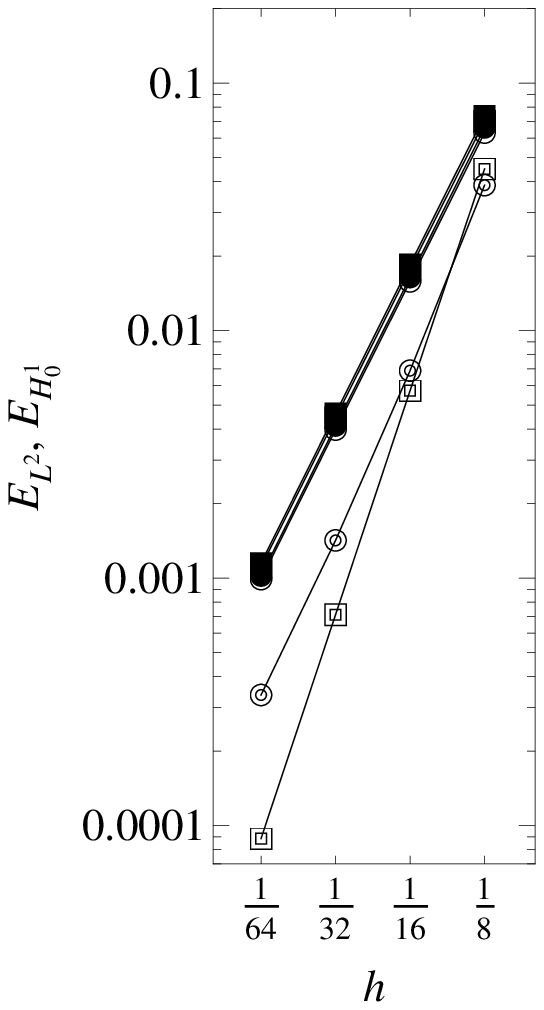}
	\put(35,15){\includegraphics[width=8mm]{tri23.eps}}
	\end{overpic}
	\caption{Graphs of $E_{L^2}$ and $E_{H_0^1}$ versus $h$ in Example \ref{exam:sinsin} with
	$\nu=10^{-2}$ by $\pk k$-element. $k=1$ (left) and $k=2$ (right)
	}
  	\label{fig:sinnu-2}
}
\begin{mytable}
\caption{The values of errors and orders of the graph in Fig. \ref{fig:sinnu-2} by $\pk 1$-element}
\label{table:sinnu-2_p1}
\begin{tabular}{rlr@{\hspace{8mm}}lr@{\hspace{8mm}}lr}
\hline
$N$ & \markone & order & \markthree & order & \markfive & order \rule{0cm}{\columnheight}\\
\hline
\hline
8 & 8.14E-02 &  & 8.14E-02 &  & 1.10E-01 & \\
16 & 3.64E-02 & 1.16  & 1.90E-02 & 2.10  & 4.36E-02 & 1.34 \\
32 & 1.70E-02 & 1.10  & 4.58E-03 & 2.05  & 1.87E-02 & 1.22 \\
64 & 8.53E-03 & 0.99  & 1.19E-03 & 1.94  & 9.18E-03 & 1.03 \\
\hline
$N$ & \marktwo & order & \markfour & order & \marksix & order \rule{0cm}{\columnheight}\\
\hline
\hline
8 & 8.97E-02 &  & 8.97E-02 &  & 1.09E-01 & \\
16 & 3.68E-02 & 1.29  & 2.13E-02 & 2.07  & 4.23E-02 & 1.37 \\
32 & 1.78E-02 & 1.05  & 4.83E-03 & 2.14  & 1.92E-02 & 1.14 \\
64 & 8.90E-03 & 1.00  & 1.29E-03 & 1.90  & 9.43E-03 & 1.03 \\
\hline
\end{tabular}
\end{mytable}
\begin{mytable}
\caption{The values of errors and orders of the graph in Fig. \ref{fig:sinnu-2} by $\pk 2$-element}
\label{table:sinnu-2_p2}
\begin{tabular}{rlr@{\hspace{8mm}}lr@{\hspace{8mm}}lr}
\hline
$N$ & \markone & order & \markthree & order & \markfive & order \rule{0cm}{\columnheight}\\
\hline
\hline
8 & 7.01E-02 &  & 4.50E-02 &  & 7.37E-02 & \\
16 & 1.77E-02 & 1.99  & 5.72E-03 & 2.98  & 1.85E-02 & 1.99 \\
32 & 4.44E-03 & 2.00  & 7.11E-04 & 3.01  & 4.63E-03 & 2.00 \\
64 & 1.11E-03 & 2.00  & 8.86E-05 & 3.00  & 1.15E-03 & 2.01 \\
\hline
$N$ & \marktwo & order & \markfour & order & \marksix & order \rule{0cm}{\columnheight}\\
\hline
\hline
8 & 6.31E-02 &  & 3.87E-02 &  & 6.60E-02 & \\
16 & 1.58E-02 & 2.00  & 6.89E-03 & 2.49  & 1.64E-02 & 2.01 \\
32 & 3.98E-03 & 1.99  & 1.42E-03 & 2.28  & 4.12E-03 & 1.99 \\
64 & 9.90E-04 & 2.01  & 3.37E-04 & 2.08  & 1.02E-03 & 2.01 \\
\hline
\end{tabular}
\end{mytable}
\onefig{
   	\begin{overpic}[width=\figwidthbasic]{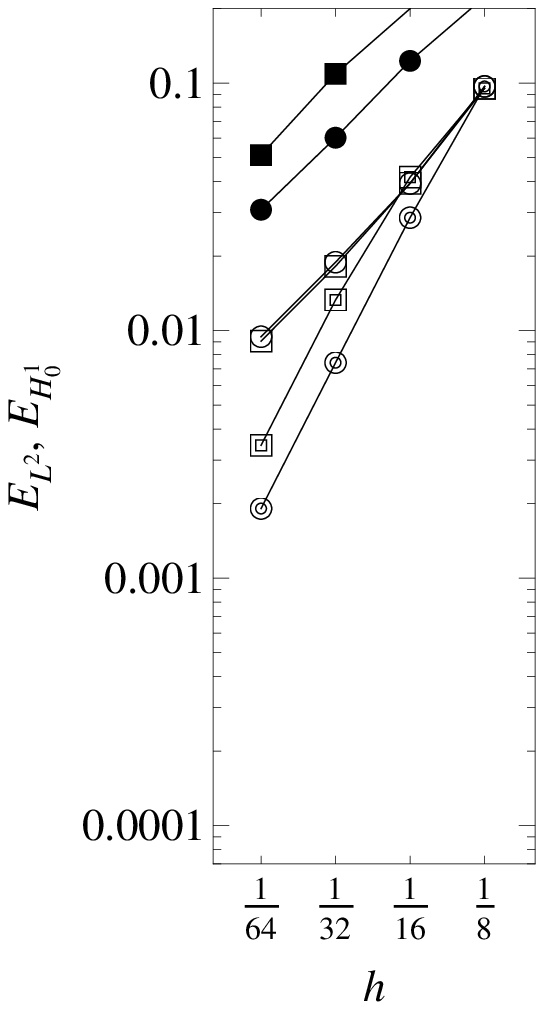}
  	\put(30,20){\includegraphics[width=10mm]{tri12.eps}}
  	\end{overpic}
	\begin{overpic}[width=\figwidthbasic]{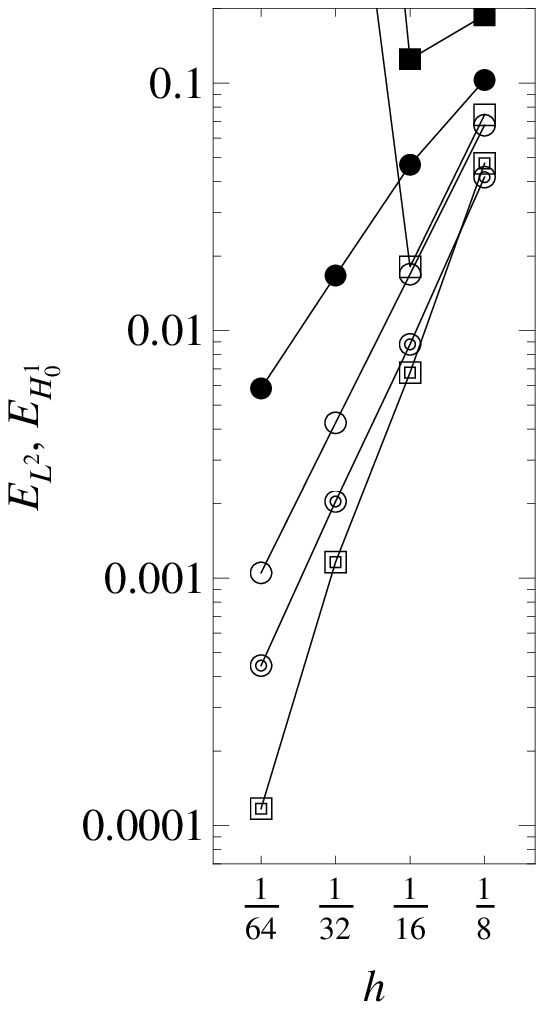}
	\put(35,15){\includegraphics[width=8mm]{tri23.eps}}
	\end{overpic}
	\caption{Graphs of $E_{L^2}$ and $E_{H_0^1}$ versus $h$ in Example \ref{exam:sinsin} with
	$\nu=10^{-5}$ by $\pk k$-element. $k=1$ (left) and $k=2$ (right)
	}
	\label{fig:sinnu-5}
}
\begin{mytable}
\caption{The values of errors and orders of the graph in Fig. \ref{fig:sinnu-5} by $\pk 1$-element}
\label{table:sinnu-5_p1}
\begin{tabular}{rlr@{\hspace{8mm}}lr@{\hspace{8mm}}lr}
\hline
$N$ & \markone & order & \markthree & order & \markfive & order \rule{0cm}{\columnheight}\\
\hline
\hline
8 & 9.53E-02 &  & 9.53E-02 &  & 2.90E-01 & \\
16 & 3.94E-02 & 1.27  & 4.17E-02 & 1.19  & 2.00E-01 & 0.54 \\
32 & 1.82E-02 & 1.11  & 1.33E-02 & 1.65  & 1.09E-01 & 0.88 \\
64 & 9.07E-03 & 1.00  & 3.44E-03 & 1.95  & 5.12E-02 & 1.09 \\
\hline
$N$ & \marktwo & order & \markfour & order & \marksix & order \rule{0cm}{\columnheight}\\
\hline
\hline
8 & 9.70E-02 &  & 9.70E-02 &  & 2.23E-01 & \\
16 & 3.93E-02 & 1.30  & 2.86E-02 & 1.76  & 1.23E-01 & 0.86 \\
32 & 1.89E-02 & 1.06  & 7.42E-03 & 1.95  & 6.02E-02 & 1.03 \\
64 & 9.45E-03 & 1.00  & 1.91E-03 & 1.96  & 3.08E-02 & 0.97 \\
\hline
\end{tabular}
\end{mytable}
\begin{mytable}
\caption{The values of errors and orders of the graph in Fig. \ref{fig:sinnu-5} by $\pk 2$-element}
\label{table:sinnu-5_p2}
\begin{tabular}{rlr@{\hspace{8mm}}lr@{\hspace{8mm}}lr}
\hline
$N$ & \markone & order & \markthree & order & \markfive & order \rule{0cm}{\columnheight}\\
\hline
\hline
8 & 7.45E-02 &  & 4.74E-02 &  & 1.88E-01 & \\
16 & 1.81E-02 & 2.04  & 6.77E-03 & 2.81  & 1.25E-01 & 0.59 \\
32 & 3.94E+00 & -7.77  & 1.16E-03 & 2.55  & 1.22E+02 & -9.93 \\
64 & 1.10E+00 & 1.84  & 1.17E-04 & 3.31  & 8.17E+01 & 0.58 \\
\hline
$N$ & \marktwo & order & \markfour & order & \marksix & order \rule{0cm}{\columnheight}\\
\hline
\hline
8 & 6.76E-02 &  & 4.17E-02 &  & 1.03E-01 & \\
16 & 1.69E-02 & 2.00  & 8.80E-03 & 2.24  & 4.68E-02 & 1.14 \\
32 & 4.24E-03 & 1.99  & 2.04E-03 & 2.11  & 1.67E-02 & 1.49 \\
64 & 1.05E-03 & 2.01  & 4.43E-04 & 2.20  & 5.84E-03 & 1.52 \\
\hline
\end{tabular}
\end{mytable}

Figure \ref{fig:sinnu-2} shows the log-log graphs of $E_{L^2}$ and $E_{H_0^1}$ versus $h$ with $\nu=10^{-2}$.  
The left graph shows the results of $\pk1$-element and Table \ref{table:sinnu-2_p1} shows the values of them.
The convergence orders of $E_{L^2}$ with $\Delta t=O(h)$ are almost 1 in both schemes (\markone, \marktwo).
The orders of $E_{L^2}$ with $\Delta t=O(h^2)$ are almost $2$ in both schemes (\markthree, \markfour).
The orders of $E_{H_0^1}$ are almost 1 in both schemes (\markfive, \marksix).
The right graph of Fig. \ref{fig:sinnu-2} shows the results of $\pk 2$-element and Table \ref{table:sinnu-2_p2} shows the values of them.
The convergence orders of $E_{L^2}$ with $\Delta t=O(h^2)$ are almost 2 in both schemes (\markone, \marktwo).
The order of $E_{L^2}$ with $\Delta t=O(h^3)$ is almost $3$ in \jurai~(\markthree) and almost $2$ in \zettai~(\markfour).
The orders of $E_{H_0^1}$ are almost 2 in both schemes (\markfive, \marksix).
These results are consistent with the theoretical 
ones of \zettai, $E_{L^2}=O(\Delta t + h^2 + h^{k+1})$ and $E_{H_0^1}=O(\Delta t + h^2 + h^{k})$.

Figure \ref{fig:sinnu-5} shows the log-log graphs of $E_{L^2}$ and $E_{H_0^1}$ versus $h$ with $\nu=10^{-5}$. 
The left graph shows the results of $\pk1$-element and Table \ref{table:sinnu-5_p1} shows the values of them.  
The convergence orders of $E_{L^2}$ with $\Delta t=O(h)$ are almost $1$ in both schemes (\markone, \marktwo). 
The orders of $E_{L^2}$ with $\Delta t=O(h^2)$ are almost $2$ for small $h$ in both schemes (\markthree, \markfour).
The orders of $E_{H_0^1}$ are almost $1$ in both schemes (\markfive, \marksix). 
The right graph of Fig. \ref{fig:sinnu-5} shows the results of $\pk2$-element and Table \ref{table:sinnu-5_p2} shows the values of them.  
The errors $E_{L^2}$ with $\Delta t=O(h^2)$ are too large at $N=32$ and $64$ to be plotted in the graph in \jurai \ (\markone) while the convergence order is almost $2$ in \zettai \ (\marktwo).
The order $E_{L^2}$ with $\Delta t=O(h^3)$ is almost $3$ for small $h$ in \jurai \ (\markthree) and almost $2$ in \zettai \ (\markfour).
The errors $E_{H_0^1}$ are too large at $N=32$ and $64$ to be plotted in the graph in \jurai~(\markfive) while we can observe the convergence but the order is less than $2$ in \zettai~(\marksix).
In order to obtain the theoretical convergence order $O(h^2)$, it seems that finer mesh will be necessary.
%
\section{Conclusions}\label{sec:conclusion}
We have presented a genuinely stable Lagrange--Galerkin scheme for convection-diffusion problems.
In the scheme locally linearized velocities are used and the integration is executed exactly without numerical quadrature.
For the $\pk k$-element we have shown error estimates 
of $O(\Delta t + h^2 + h^{k+1})$
in $\ell^\infty(L^2)$-norm and 
of $O(\Delta t + h^2 + h^k)$
in $\ell^\infty (H^1)$-norm.  
We have also obtained error estimate, $c(\Delta t + h^2 + h^k)$
in $\ell^\infty(L^2)$-norm, 
where the coefficient $c$ is dependent on the exact solution $\phi$ but independent of the diffusion constant $\nu$.   
Numerical results have reflected these estimates.
The extension to the Navier--Stokes equations will be discussed in a forthcoming paper.
\begin{acknowledgements}
The first author was supported by JSPS (Japan Society for the Promotion of Science) under Grants-in-Aid for Scientific Research (C)No. 25400212 and (S)No. 24224004 and 
under the Japanese-German Graduate Externship (Mathematical Fluid Dynamics) and by Waseda University under Project research, Spectral analysis and its application to the stability theory of the Navier-Stokes equations of Research Institute for Science and Engineering.
The second author was supported by JSPS 
 under Grant-in-Aid for JSPS Fellows No. 26$\cdot$964.
\end{acknowledgements}
\bibliographystyle{plain}

\end{document}